\def\R{{\mathbb R}}
\def\N{{\mathbb N}}
\def\tt{\mathbf{t}}
\def\ii{\mathbf{i}}
\def\0{\mathbf{0}}
\def\tb{\tilde b}
\newtheorem{theo}{Theorem}
\newtheorem{prop}{\indent Proposition}
\newtheorem{lem}{\indent Lemma}
\newtheorem{defin}{\indent Definition}
\newtheorem{ex}{\indent Example}
\newtheorem{rem}{\indent Remark}
\newtheorem{ass}[theo]{Assumption}
\newtheorem{cor}{\indent Corollary}
\begin{document}

\title[Invariant density of PDMP's]{Absolute continuity of the invariant measure in Piecewise Deterministic Markov Processes having degenerate jumps}

\author{E. L\"ocherbach }

\address{CNRS UMR 8088, D\'epartement de Math\'ematiques, Universit\'e de Cergy-Pontoise,
2 avenue Adolphe Chauvin, 95302 Cergy-Pontoise Cedex, France.}

\email{eva.loecherbach@u-cergy.fr}

\subjclass[2010]{Primary: 60Hxx, Secondary: 60J75, 60G55}

\keywords{Piecewise deterministic Markov processes. Invariant measure. Integration by parts based on jump times. Jump processes.}

\date{January 25, 2016}

\begin{abstract}
We consider piecewise deterministic Markov processes with degenerate transition kernels of the {\it house-of-cards-}type. We use a splitting scheme based on jump times to prove the absolute continuity, as well as some regularity, of the invariant measure of the process. Finally, we obtain finer results on the regularity of the one-dimensional marginals of the invariant measure, using integration by parts with respect to the jump times.  
\end{abstract}
\maketitle

\section{Introduction}
We consider an interacting particle system $X_t = (X_t^1, \ldots, X_t^N) $ 
taking values in $\R^N$ and solving, for $t\geq 0$,
\begin{equation}\label{eq:dyn}
X_t =  X_0 +   \int_0^t b ( X_s) ds +  \sum_{i=1}^N \int_0^t \int_0^\infty 
  a_i ( X_{s-} )    1_{ \{ z \le  f_i ( X_{s-}) \}} N^i (ds, dz).
\end{equation}  
Here, $N^i  (ds, dz), 1 \le i \le N,$ are independent Poisson random measures on $\R_+ \times \R_+ $ having intensity measure $ds dz $ each.  The function $b : \R^N  \to \R^N  $ is a smooth drift function, and for each $ 1 \le i \le N,$ $a_i : \R^N  \to \R^N  $ are jump functions, and $f_i :\R^N \mapsto \R_+ $ jump rate functions. The infinitesimal generator 
of the process $X$ is given for any smooth test function $ g : \R^N \to \R $ by
\begin{equation}\label{eq:generator0}
L g (x ) =  \sum_{i=1}^N  f_i(x) \left[ g (  x + a_i ( x)  ) - g (x) \right]
+  <  \nabla g (x),  b ( x) >   .
\end{equation}

We work under the  
assumption that there exists a unique non-exploding solution to \eqref{eq:dyn} which is recurrent in the sense of Harris having a unique invariant probability measure $m.$ 

The aim of the present paper is to study the smoothness of the invariant measure $m$  of this particle system in the case of degenerate transitions which are of the form 
\begin{equation}\label{eq:afterjump1}
 x + a_i ( x) = \left( \begin{array}{c}
x^1 + a_i^1 ( x)\\
\vdots \\
x^{i-1} + a_i^{  i-1} (x) \\
0 \\
x^{i+1} + a_i^{i+1} (x)\\
\vdots \\
x^N + a_i^{ N} (x) 
\end{array}
\right) 
\leftarrow \mbox{ coordinate } i , 
\end{equation}
for $ x = (x^1, \ldots , x^N ) \in \R^N.$ This means that a jump of particle $i $ leads to a reset of this particle's position to $0 $ and gives an additional $ a_i^j ( x)  $ to any other particle $j.$  We call such processes {\it house-of-cards-}like interacting particle systems. Systems of this type are good models for systems of interacting neurons as introduced by Galves and L\"ocherbach (2016) \cite{antonio-eva}, see also Duarte and Ost (2016) \cite{bresiliens} and Hodara et al. (2016) \cite{pierre}.

Notice that \eqref{eq:dyn} is a Piecewise Deterministic Markov process (PDMP) in the sense of Davis (1993) \cite{Davis93}. The process evolves according to the deterministic flow $ \gamma_{s, t } ( x) $ solution of 
$$ \gamma_{s, t } (x) = x + \int_s^t b( \gamma_{s, u} (x) ) du , s \le t ,$$
between successive jumps, and the only randomness is given by the random jump times and the choice of the (random) positions of the process right after the jump.  The jump rate of the process depends on the configuration of the process and is given by $ \bar f (x)= \sum_{i=1}^N f^i ( x) .$ The transition kernel $ Q ( x, dy ) = \sum_{i=1}^N  \frac{f_i ( x)}{\overline f (x) }  \delta_{ x + a_i ( x) } ( dy ) $ is (partly) degenerate if the jumps are governed by transitions as described in \eqref{eq:afterjump1}; indeed, in this case, not only transitions do not create density, but they even destroy density for the particles that jump -- which are reset to $ 0.$ As a consequence, we are in a very singular scheme here.

Invariant measures and densities of PDMP's or more generally of jump processes have been widely studied in the literature. An overwhelming number of articles is devoted to the study of the regularity of the transition semi-group, i.e.\ the study of the existence and regularity of a transition density.  For this purpose,  the Malliavin calculus for processes with jumps has been developed, using the regularity both created by the jump amplitudes or the jump times. We refer to the by now classical studies of Bichteler, Gravereaux  and Jacod (1987) \cite{bgj}, Bismut (1983) \cite{bismut}, Carlen and Pardoux (1990) \cite{carlen-pardoux},  Denis (2000) \cite{Denis} and Picard (1996) \cite{picard}. These papers deal with a much wider class of models including infinite jump activity and degenerate jump measures (in the sense that jumps do not create density). 

Concerning more specifically the world of PDMP's which are models having a finite jump activity depending on state space, only few results on  the regularity of the associated semi-group are available. Fournier (2002) \cite{fournier} exploits some monotonicity properties of the jumps -- but this monotonicity is not present in our situation, since transitions of the kind \eqref{eq:afterjump1} are inherently non-monotone.  
Concerning the invariant measure of PDMP's in an abstract frame,  we refer the reader to Costa and Dufour (2008) \cite{costa-dufour} for a general study of the stability properties of PDMP's.  Regarding the regularity of the invariant measure in PDMP models, in most cases this study  is based on the amplitude of the jumps, i.e.\ on some smoothness created by the transition kernel. This approach has been used e.g.\ by Biedrzycka and Tyran-Kaminska (2016) \cite{Tyran} in the case where the transition kernel transports Lebesgue absolute continuity.  An approach based on the jump times has been followed by Bena\"{i}m et al. (2015) \cite{micheletal}, but in the very specific situation of randomly switching systems of ODE's without jumps in the spatial variable. 

All these methods cannot be applied in our model, at least not directly.  The main reason for this is the fact that not only the jumps do not create smoothness, but that they destroy it partially. The fact that the transition kernel is not creating Lebesgue density implies that we have to use the noise present in the jump times. In this sense we are close to Carlen and Pardoux (1990) \cite{carlen-pardoux} or also to Bally and Cl\'ement (2010) \cite{BallyClementine}. 
Finally we have also been inspired by the approach proposed by Coquio and Gravereaux (1992) \cite{coquio}: They study the smoothness of the invariant measure of Markov chains, based on Malliavin calculus. In all these papers, {\it preservation of smoothness} is assumed in the sense that -- once created -- smoothness will not be destroyed again by the transition kernel. There is no such preservation of smoothness in our model.

To resume, we are facing a very singular situation where the only way of creating smoothness is by using the jump times and where transitions destroy accumulated smoothness partially. 

The present article gives conditions 
implying that the invariant measure of the process possesses a Lebesgue density. Moreover we study conditions under which this density is smooth.  In Section \ref{sec:2} we show how a {\it good} succession of jumps, characterized by a {\it good} order in which successive particles jump, can lead to the creation of Lebesgue density in $\R^N$ (compare to Definition \ref{def:good}). In a typical house-of-cards-like interacting particle system, a good order is given if first the first particle jumps, then the second, then the third, and so on. As a consequence, a {\it good succession of jumps} does not happen all the time and is a rather rare event. However, due to Harris recurrence, it is sufficient that this event has strictly positive probability. Then a regenerative argument (inspired by the well-known technique of Nummelin splitting) allows to deduce the following : There exists a stopping time $R  $ which is finite almost surely,  such that the position $X_R$ of the process at the stopping time possesses a smooth Lebesgue density  
(see Theorem \ref{theo:7} and Corollary \ref{cor:1}).  This result is achieved by using a change of variables based on the jump times. 

The most important point is then to study how this smoothness is transported by the dynamics. Already the next jump of, say, particle $i$ destroys the smoothness in direction of  $e_i, $ the $i-$th unit vector of $ \R^N.$ 
The main idea is to show that this destruction of density in direction of $e_i $ can be {\it counter-balanced} by the creation of density due to the next jump time. Loosely speaking, the exponential density of the next jumping time may create density in direction of $e_i $ -- under suitable conditions on the deterministic flow. The technical details are given in Theorem \ref{theo:8}, the main ingredient is a simple change of variables using the coarea formula.

The principal application we have in mind is given by systems of interacting neurons as in \cite{bresiliens}, \cite{antonio-eva} and \cite{pierre}. For these systems, it can be shown that our strategy works and that the system possesses an invariant probability measure which is absolutely continuous with respect to the Lebesgue measure. It is in \cite{bresiliens} that {\it good} successions of jumps have been introduced to create Lebesgue density in view of proving the Harris recurrence of the process.

The idea of using {\it favorable noise} which is eventually created by the system, coupled to the Harris recurrence and based on a regeneration approach, goes back to the work of  Poly (2012) \cite{poly}, compare to his Theorem 1.1 and his Remark 1.3. The main difference with his work is that he also imposes the preservation of smoothness property for the transition kernel. 
Using splitting methods to create noise has also been applied in 
Bally and Rey (2015) \cite{bally-rey}, however in a different context. 

Once we have obtained a Lebesgue density of the invariant measure, it is natural to ask for further smoothness properties. This is the content of Section \ref{sec:37} where we discuss the smoothness of the invariant density in systems where the only interactions between particles are given by the jumps. 
If we dispose of a good control of the balance between the explosion rate $ e^{Bt }$ of the inverse flow as time tends to infinity and the survival rate $ e^{ - \int_0^t \bar f(\gamma_s (x) ) ds } ,$ see Theorem \ref{theo:10}, then we obtain regularity of the invariant density up to some order which is given by the balance of these two rates. In particular, the invariant density will not be $ C^\infty $ in general, even if all coefficients of the system are supposed to be smooth.  

A second part of the paper is devoted to the study of the marginal density of a single particle in the invariant regime.  By using an integration by parts formula with respect to the jump times, we obtain  Theorem \ref{theo:invmeasure} which shows that the marginal density of a single particle in the invariant regime inherits the smoothness properties of the jump rate functions $f_i $ and of the drift function $b,$ locally on a set of positions which are far from the equilibria of the flow and far from $0  .$  

Our paper is organized as follows. In Section \ref{sec:2} we state our main assumptions, establish a useful relation with the invariant measure of the jump chain associated to the process -- the positions of the process just before jumping --  and give our regularity result concerning the invariant measure of a single particle in Theorem \ref{theo:invmeasure}. The proof of this theorem is given in the Appendix. Section \ref{sec:3} is devoted to the study of the invariant measure of the whole particle process. We start by introducing skeletons for the jump chain, study its derivatives with respect to the successive jump times, introduce the notion of a {\it good succession} of jumps in Definition \ref{def:good} and introduce the splitting procedure in Sections \ref{sec:34} and \ref{sec:35}. An application of the coarea formula allows to prove the Lebesgue absolute continuity of the invariant measure (Theorem \ref{theo:8} in Section \ref{sec:36}). Finally, in Section \ref{sec:37}, and particularly in Theorem \ref{theo:10}, we discuss the regularity properties of the invariant density.

\section{Main assumptions and regularity of marginals}\label{sec:2}

\subsection{The dynamics}

We consider $N$ independent Poisson random measures
$N^i  (ds, dz), 1 \le i \le N, $ on $\R_+ \times \R_+ $ having intensity measure $ds dz$ each and study the piecewise deterministic Markov process (PDMP)
$X_t = (X^{ 1 }_t, \ldots , X^{ N}_t )$
taking values in $\R^N$ and solving, for $t\geq 0$,
\begin{equation}\label{eq:dyn2}
X_t =  X_0 +   \int_0^t b ( X_s) ds +  \sum_{i=1}^N \int_0^t \int_0^\infty 
  a_i ( X_{s-} )    1_{ \{ z \le  f_i ( X_{s-}) \}} N^i (ds, dz).
\nonumber
\end{equation}  
The coefficients of this system are  the drift function $b : \R^N  \to \R^N  $ and for each $ 1 \le i \le N,$  jump functions $a_i : \R^N  \to \R^N  $  and jump rate functions $f_i :\R^N \mapsto \R_+, $ 
satisfying (at least) the following assumption.

\begin{ass}\label{ass:1}
1. $a_i : \R^N   \to \R^N , 1 \le i \le N ,$ are measurable.
  \\
2. $f_i, 1 \le i \le N ,$ are Lipschitz continuous such that  $ f_i (x) > 0 $ for all $x$ and all $i.$\\
3. $ b : \R^N \to \R^N  $ is Lipschitz continuous and of linear growth. 
\end{ass}

As a consequence of item 3.\ of the above assumption, we may introduce the 
deterministic flow $ \gamma_{s,t} (  v ) = ( \gamma_{s, t }^1 ( v ) , \ldots , \gamma_{s, t }^N ( v) ),$ solution of 
\begin{equation}\label{eq:flow}
\gamma_{s,t} (  x) =   x + \int_s^t b ( \gamma_{s, u } (x) ) du   , \; 0 \le s \le t , 
\end{equation}
for any starting configuration $x \in \R^N .$ This flow exists on $ 0 \le s \le t < \infty  ,$ due to the linear growth condition imposed on $ b.$ 

In most of the cases we will impose that $ \sup_i \sup_{x \in \R^N} f_i ( x) < \infty $ 
implying that there is no accumulation of jumps in finite time. As a consequence, there exists a unique non-exploding solution to \eqref{eq:dyn} for any starting configuration $X_0 = x.$ \footnote{Of course, a finer study of conditions ensuring the existence of a non-exploding solution to \eqref{eq:dyn} can be conducted, but this is outside the scope of the present paper.} 

We write
\begin{equation}
 \Delta_i ( x) := x + a_i ( x) ,
\end{equation}
for the configuration of the process after a jump of particle $i.$ Moreover, we introduce the short hand notation 
\begin{equation}
\bar f (x) = \sum_{i=1}^N f_i (x) 
\end{equation}
which is the total jump rate of the system, when it is in configuration $x.$ 
We suppose that 

\begin{ass}\label{ass:2}
For all $x \in \R^N, $ 
\begin{equation}\label{eq:finitejumptime}
   \int_0^\infty  \bar f ( \gamma_s (x) ) ds  = \infty ;
\end{equation}
\end{ass}
implying that the process will jump infinitely often almost surely. 

Let $ T_0 = 0 < T_1 < T_2 \ldots < T_n < \ldots $ be the successive jump times of the process, defined by 
$$ T_{n+1} = \inf \{ t > T_n : X_t \neq X_{t- } \} , n \geq 0 .$$
We introduce the jump measure 
$$ \mu ( ds, dy , dz  ) = \sum_{n \geq 1 } 1_{ \{  T_n < \infty \} }  \delta_{ (T_n, X_{T_n - } , X_{T_n} ) }  (dt, dy, dz).$$

By our assumptions, $ \mu $ is compensated by $ \nu ( ds, dy , dz) = \sum_{i=1}^N  f_i ( X_s) ds \delta_{ X_s} ( dy)  \delta_{  \Delta_i ( X_s) } (dz) .$ 

Finally, we impose the following condition.
\begin{ass}\label{ass:3}
The process $X$ is recurrent in the sense of Harris, with invariant probability measure $ m ;$ i.e.\ for any $ O \in {\mathcal B}( \R^N ) $ with $ m( O ) > 0 ,$ we have $P_x -$almost surely, $ \lim \sup_{t \to \infty } 1_O ( X_t) = 1 ,$ for any $x \in \R^N.$ Moreover, we suppose that $ \int f_i (x) m (dx) < \infty $ for all $ 1 \le i \le N,$ i.e.\ the total jump rate is integrable with respect to the invariant measure.
\end{ass}

\begin{rem}
The purpose of the present paper is not to establish recurrence conditions ensuring that Assumption \ref{ass:3} holds.  We refer the reader to Costa and Dufour (2008) \cite{costa-dufour} for a general treatment of the stability properties of PDMP's and to Duarte and Ost (2015) \cite{bresiliens} or to Hodara et al. (2016) \cite{pierre} for examples of processes that follow our model assumptions, which are systems of interacting neurons where the Harris recurrence has been proven.  
\end{rem} 

We will be mainly interested in the situation where the jumps are given by  
$$ a_i (x) = \left( 
\begin{array}{c}
*\\
\vdots \\
*\\
- x_i  \\
* \\
\vdots\\
* 
\end{array}
\right) \longleftarrow \mbox{ coordinate } i 
 . $$
In other words, a jump of the particle $i$ leads to a reset of particle $i$ to the position $0, $ and raises the positions of the other particles $j, j \neq i , $ to the new position $x^j + a_i (x)^j = \Delta_i^j ( x) .$ We call such processes {\it house-of-cards-}like interacting particle systems.
 
In this case, the transition kernel associated to the jumps of system \eqref{eq:dyn} 
$$ Q ( x, dy) = {\mathcal L} ( X_{T_1} |X_{T_1- } = x ) (dy ) =  \sum_{ i = 1 }^N \frac{f_i ( x) }{\bar f (x) } \delta_{ \Delta_i(x) } ( dy ) $$
is degenerate since the $i-$th component $ (x+ a_i (x))^i = \Delta_i^i ( x) =  0 $ for all $x .$ 


\subsection{An associated Markov chain and its invariant measure}

We start with some simple preliminary considerations. Let $Z_k = X_{T_k- } , k \geq 1 , $ be the jump chain. Then the following holds.

\begin{prop}
Grant Assumptions \ref{ass:1}--\ref{ass:3}.
$(Z_k)_k$ is Harris recurrent with invariant measure $m^Z$ given by 
$$ m^Z ( g) = \frac{1}{m ( \bar f ) } m ( \bar f g ) , $$
for any $g : \R^N \to \R$ measurable and bounded.
\end{prop}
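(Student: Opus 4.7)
The plan is to exploit the Palm-calculus relation between the empirical measure of the pre-jump chain and the continuous-time invariant measure $m$. The key input is the compensation formula for the jump measure $\mu$ described just above the statement: for any bounded measurable $g : \R^N \to \R$, the process
\[
M_t^g \ := \ \sum_{k \geq 1,\, T_k \le t} g(Z_k) \ - \ \int_0^t g(X_s)\,\bar f(X_s)\,ds
\]
is a local martingale, since $\nu$ is the predictable compensator of $\mu$.

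First I apply the ergodic theorem for the Harris process $X$ (Assumption \ref{ass:3}) to obtain, for every $h \in L^1(m)$, $\frac{1}{t} \int_0^t h(X_s)\,ds \to m(h)$ $\P_x$-a.s. Specializing to $h = \bar f g$ (which lies in $L^1(m)$ for bounded $g$ thanks to the integrability condition in Assumption \ref{ass:3}), and controlling $M_t^g / t \to 0$ by a standard localization of its predictable bracket $\langle M^g\rangle_t \leq \|g\|_\infty^2 \int_0^t \bar f(X_s)\,ds$ (which grows at most linearly in $t$), I get
\[
\frac{1}{t} \sum_{k : T_k \le t} g(Z_k) \longrightarrow m(\bar f g) \qquad \P_x\text{-a.s.}
\]
The choice $g \equiv 1$ then gives $N_t / t \to m(\bar f) \in (0,\infty)$, where $N_t$ denotes the number of jumps up to time $t$.

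Since $T_n \to \infty$ almost surely by Assumption \ref{ass:2}, substituting $t = T_n$ and dividing by $n$ yields $T_n / n \to 1 / m(\bar f)$ and hence
\[
\frac{1}{n} \sum_{k=1}^n g(Z_k) \ = \ \frac{T_n}{n} \cdot \frac{1}{T_n} \sum_{k : T_k \le T_n} g(Z_k) \ \longrightarrow \ \frac{m(\bar f g)}{m(\bar f)} \ = \ m^Z(g)
\]
$\P_x$-almost surely, for every bounded measurable $g$ and every starting point $x$. Since the Cesàro averages of every bounded test function converge to $m^Z(g)$ along every trajectory, $m^Z$ is the unique invariant probability measure of the chain $(Z_k)$.

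For Harris recurrence of $(Z_k)$, fix $B \in \B(\R^N)$ with $m^Z(B) > 0$; since $f_i > 0$ pointwise by Assumption \ref{ass:1}, also $m(B) > 0$. Writing $B = \bigcup_n (B \cap \{\bar f \geq 1/n\})$ and choosing $n$ large enough that this intersection has positive $m$-measure, the ergodic theorem applied to $h = \bar f \mathbf{1}_{B \cap \{\bar f \geq 1/n\}}$ forces $\int_0^\infty \mathbf{1}_B(X_s)\,\bar f(X_s)\,ds = \infty$ $\P_x$-almost surely, and the compensation formula applied to $\mathbf{1}_B$ then gives $\sum_k \mathbf{1}_B(Z_k) = \infty$ a.s. The main technical point throughout is the control of the martingale $M_t^g$; this is where the assumption $m(\bar f) < \infty$ in Assumption \ref{ass:3} intervenes crucially, and everything else reduces to standard ergodic theorems for Harris processes.
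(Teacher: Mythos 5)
Your proposal is correct and follows essentially the same route as the paper: decompose the jump-counting sum into the compensator $\int_0^t g(X_s)\bar f(X_s)\,ds$ plus a local martingale, kill the martingale part via its predictable bracket, apply the ergodic theorem for the Harris process $X$, and normalize by $N_t/t \to m(\bar f)$. Your explicit treatment of the Harris recurrence of $(Z_k)$ is slightly more detailed than the paper's, but the argument is the same in substance.
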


\begin{proof}
Let $ g$ be a bounded test function. It is sufficient to prove that 
$ \frac{1}{n} \sum_{k=1}^n g ( Z_k) \to m^Z ( g) $
as $n \to \infty, $ $P_x-$almost surely, for any fixed starting point $x .$ But
$$ \frac{1}{n} \sum_{k=1}^n g ( Z_k) = \frac{1}{n} \sum_{k=1}^n g ( X_{T_k-}),$$
and, putting $ N_t = \sup \{ n : T_n \le t \},$  
$$
 \lim_{ n \to \infty }  \frac{1}{n} \sum_{k=1}^n g ( X_{T_k-}) = \lim_{t \to \infty}\frac{t}{N_t} \frac{1}{t} \sum_{k=1}^{N_t} g ( X_{T_k-}) 
= \lim_{t \to \infty}\frac{t}{N_t} \frac{1}{t} \int_0^t\int_{\R^N}\int_{\R^N} g (y) \mu ( ds, dy, dz) .
$$
By the ergodic theorem, $ N_t/t \to \int  \bar f(x) m (dx ) = m( \bar f ) ,$ and this convergence holds almost surely. Moreover, 
\begin{equation}\label{eq:martingaleplusbiaisconv}
 \frac{1}{t} \int_0^t\int_{\R^N} \int_{\R^N} g (y) \mu ( ds, dy, dz)  = \frac{1}{t} M_t + \frac{1}{t} \int_0^t\int_{\R^N} \int_{\R^N} g (y) \nu ( ds, dy, dz) ,
\end{equation}
where $ M_t = \int_0^t \int \int g ( y) [\mu (ds, dy, dz) - \nu  (ds, dy, dz) ].$ Then $M_t$ is in ${\mathcal M}^{2,d}_{loc} ,$ the set of all
locally square integrable purely discontinuous martingales,
with predictable quadratic covariation process
\begin{equation}
<M>_t\; =\int_0^t g^2 (X_s)  \bar f (X_s )ds 
\end{equation}
where
$$ \frac{< M>_t}{t} \to m ( g^2 \bar f ) $$
almost surely, as $t \to \infty.$ By the martingale convergence theorem, see e.g.\ Jacod-Shiryaev (2003) \cite{js}, $ t^{-1/2} M_t $ converges in law to a normal distribution. As a consequence, $M_t/t \to 0$ almost surely. 

We now treat the second term in \eqref{eq:martingaleplusbiaisconv}. By the ergodic theorem for integrable additive functionals, 
$$ \frac{1}{t} \int_0^t\int_{\R^N } \int_{\R^N}g (y) \nu ( ds, dy, dz) = \frac{1}{t} \int_0^t g (X_s) \bar f( X_s) ds \to m ( \bar f g ),$$
and this finishes the proof.
\end{proof}

As a consequence of the above proposition, to prove the absolute continuity of $m,$ it is sufficient to show that the invariant measure $m^Z $ of the chain $(Z_k)_k$ is absolutely continuous with respect to the Lebesgue measure. 

We first obtain a useful representation formula for $m$ from the above proposition. Introduce 
\begin{equation}
 e (x,t) = e^{ - \int_0^t  \bar f ( \gamma_s ( x) ) ds }  ,
\end{equation}
which is the survival rate of the process starting from position $x.$ 

\begin{prop}
Let $ g : \R^N \to \R$ be a bounded test function. Then
\begin{equation}\label{eq:useful}
m ( g) = \sum_{i=1}^N \int_{\R^N} m (dx) f_i (x)  \int_0^\infty e (\Delta_i ( x) , t) g ( \gamma_t (\Delta_i (x) ) ) dt.
\end{equation}
\end{prop}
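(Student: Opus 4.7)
\emph{Plan.} The identity admits the probabilistic interpretation that $m(g)$ is obtained by averaging $g$ along the deterministic flow--segments traversed between two consecutive jumps. I would prove it by combining the preceding proposition with the regenerative structure of the PDMP at the jump times.

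The first step is to identify the invariant probability $m^Y$ of the post-jump chain $Y_k := X_{T_k}$. Since, conditional on $Z_k = x$, particle $i$ jumps with probability $f_i(x)/\bar f(x)$, the strong Markov property together with the preceding proposition's relation $m^Z(dx) = \bar f(x)\,m(dx)/m(\bar f)$ yields
\[
\int m^Y(dy)\,\varphi(y) \;=\; \frac{1}{m(\bar f)}\int m(dx)\sum_{i=1}^N f_i(x)\,\varphi(\Delta_i(x)).
\]

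The second step exploits that, given $Y_k = y$, the process evolves deterministically as $X_{T_k+t} = \gamma_t(y)$ on $[0, T_{k+1}-T_k)$, and that the first--jump survival under $P_y$ equals $e(y,t)$. Hence the cycle reward and cycle length are
\[
E_y\!\left[\int_0^{T_1}\!g(X_s)\,ds\right] = \int_0^\infty\!g(\gamma_t(y))\,e(y,t)\,dt, \qquad E_y[T_1] = \int_0^\infty\!e(y,t)\,dt.
\]
Applying the ergodic theorem for $X$ along the partition $[0,T] = \bigcup_k [T_k, T_{k+1})$, in the form of the renewal--reward identity $m(g)\,E_{m^Y}[T_1] = E_{m^Y}[\int_0^{T_1} g(X_s)\,ds]$, and using $N_t/t \to m(\bar f)$ established in the proof of the previous proposition (which forces $E_{m^Y}[T_1] = 1/m(\bar f)$), substitution of the above expression for $m^Y$ produces the announced formula.

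The main technical obstacle is to make the renewal--reward identity rigorous when $g$ is only bounded: although $E_{m^Y}[T_1]<\infty$, the quantity $E_y[T_1]$ may fail to be finite on an $m^Y$--null set, so some care is needed. A standard truncation of $g$ together with a monotone class argument, combined with the ergodic theorem for additive functionals exactly as in the proof of the preceding proposition, handles this. An essentially equivalent and more direct infinitesimal route is to set $H(y) := \int_0^\infty g(\gamma_t(y))\,e(y,t)\,dt$, verify via differentiation along the flow that $LH(y) = \sum_{i=1}^N f_i(y)\,H(\Delta_i(y)) - g(y)$, and conclude from $m(LH) = 0$; this shortcut trades the renewal argument for a separate verification that $H$ lies in the domain of the generator.
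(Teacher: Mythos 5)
Your proposal is correct, but it reaches \eqref{eq:useful} by a different mechanism than the paper. The paper's proof is purely algebraic once the previous proposition is in hand: it writes $m(g)=m(\bar f)\,m^Z(g/\bar f)$, invokes the invariance of $m^Z$ under one step of the jump chain (jump according to $Q$, then flow for a random time with density $\bar f(\gamma_t(y))\,e(y,t)\,dt$), and observes that the factor $\bar f(\gamma_t(y))$ in that density cancels the $1/\bar f$ in the test function, leaving exactly $e(\Delta_i(x),t)$. You instead pass through the post-jump chain $Y_k=X_{T_k}$ and the renewal--reward (Kac occupation) identity over the cycles $[T_k,T_{k+1})$. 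The two routes are Palm-dual to each other and both rest on $m^Z=\bar f\,m/m(\bar f)$; what the paper's choice of test function $g/\bar f$ buys is that one never has to discuss $E_y[T_1]=\int_0^\infty e(y,t)\,dt$, which, as you correctly note, need not be finite pointwise (only $E_{m^Y}[T_1]=1/m(\bar f)$ is controlled), so your route carries an extra truncation step that the paper's avoids entirely. Your generator-based shortcut via $LH(y)=\sum_i f_i(y)H(\Delta_i(y))-g(y)$ is also a valid third route, with the domain/integrability caveat you flag (again because $H$ is bounded only by $\|g\|_\infty\, E_y[T_1]$).
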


\begin{proof}
We have by the relation between $m^Z$ and $m,$  
$$
m ( g) =  m ( \bar f ) E_{m^Z}  \left( g ( Z_n )  \frac{1}{ \bar f (Z_n) }\right) . 
$$
We use that $ m^Z = {\mathcal L}(X_{T_2-} | X_{T_1-} \sim m^Z ) . $ 
Then we obtain
\begin{eqnarray}
m (g) &=& m ( \bar f ) m^Z \left(  \frac{g ( Z_n  ) }{ \bar f (Z_n) }\right) \nonumber \\
&=&  \int_{\R^N}  \bar f (x) m (dx) \int Q ( x, dy)  \int_0^\infty \bar f ( \gamma_t (  y ))  e^{- \int_0^t \bar f ( \gamma_s (  y)) ds } g ( \gamma_t (  y ) ) \frac{1 }{ \bar f ( \gamma_t (  y ))} dt \nonumber \\
&=&  \sum_{i=1}^N \int_{\R^N} m ( dx)  f_i (x)   \int_0^\infty e( \Delta_i ( x) , t)  g( \gamma_t (\Delta_i ( x) ) ) dt . 
\end{eqnarray}
\end{proof}

As a corollary of the above representation, we deduce that the invariant measure of the process is absolutely continuous if $N=1,$ i.e.\ in the one-dimensional case.

\begin{cor}\label{theo:4}
Grant Assumptions \ref{ass:1}, \ref{ass:2} and \ref{ass:3}.
Suppose that $N = 1$ and moreover that $b$ is one times differentiable, having a bounded derivate. Then $m$ is absolutely continuous with respect to the Lebesgue measure on $ {\mathcal A} := \{ x : b ( x) \neq 0 \} ,$ having a continuous density on $ {\mathcal A}.$ 
\end{cor}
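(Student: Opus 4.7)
The proof will be based directly on the representation formula \eqref{eq:useful}, which in the one-dimensional case $N=1$ becomes
\begin{equation*}
m(g) = \int_\R f(x)\, m(dx) \int_0^\infty e(\Delta(x), t)\, g(\gamma_t(\Delta(x)))\, dt,
\end{equation*}
where $\Delta(x) := x + a(x)$. My plan is to transform the inner time-integral into a space-integral by the change of variables $y = \gamma_t(\Delta(x))$, and to read off the density of $m$ with respect to Lebesgue measure on $\mathcal{A}$.

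First, if $b(\Delta(x)) = 0$, then $\gamma_t(\Delta(x)) \equiv \Delta(x)$ is an equilibrium lying in $\mathcal{A}^c$, so the inner integral equals $g(\Delta(x))/\bar f(\Delta(x))$ and contributes nothing against a test function supported in $\mathcal{A}$. For $x$ with $b(\Delta(x)) \neq 0$, the $C^1$-regularity and Lipschitz character of $b$ guarantee, by standard ODE theory, that $t \mapsto \gamma_t(\Delta(x))$ is a strictly monotone $C^1$-diffeomorphism from $[0,\infty)$ onto its maximal forward orbit $J(\Delta(x))$, contained in the connected component of $\mathcal{A}$ containing $\Delta(x)$. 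Writing $\tau(\Delta(x),\cdot)$ for its inverse and using $\frac{d}{dt}\gamma_t = b(\gamma_t)$, the substitution $dy = b(y)\,dt$ yields, for every bounded $g$ supported in $\mathcal{A}$,
\begin{equation*}
m(g) = \int_{\mathcal{A}} g(y)\, \rho(y)\, dy, \qquad \rho(y) := \frac{1}{|b(y)|}\int_\R f(x)\,\mathbf{1}_{\{y \in J(\Delta(x))\}}\, e(\Delta(x), \tau(\Delta(x), y))\, m(dx).
\end{equation*}
This is precisely the absolute continuity statement with an explicit density.

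To obtain continuity of $\rho$ on $\mathcal{A}$, I would apply dominated convergence to the $m(dx)$-integral: the prefactor $1/|b(y)|$ is continuous on $\mathcal{A}$ since $b$ is $C^1$; the integrand is uniformly bounded by $f(x)$, which is $m$-integrable by Assumption \ref{ass:3}; and for each fixed $x$ with $b(\Delta(x)) \neq 0$, the map $y \mapsto \mathbf{1}_{\{y \in J(\Delta(x))\}}\, e(\Delta(x), \tau(\Delta(x), y))$ is continuous at every $y_0 \in \mathcal{A}$ distinct from the starting endpoint $\Delta(x)$ of $J(\Delta(x))$ (the other endpoint sitting on $\partial \mathcal{A}$ and thus being excluded). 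The delicate step I anticipate is controlling the contribution of those $x$ for which the integrand is discontinuous at a prescribed $y_0$ — namely the atoms of the pushforward $\Delta_\# m$ at $y_0$. This is where I would use the monotone nature of the family $\{x : y \in J(\Delta(x))\}$ as $y$ varies, together with the $C^1$-smoothness of $b$ and the continuity of $e(\cdot,\cdot)$, to reduce matters to a one-sided limit argument that recovers full continuity on $\mathcal{A}$.
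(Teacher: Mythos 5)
Your change-of-variables derivation of the explicit density is correct and is a genuinely different route from the paper's: the paper instead integrates by parts in $t$ to obtain the duality bound $|m(g')|\le C(\varepsilon)\|g\|_\infty$ for $g$ supported in $\{|b|\ge\varepsilon\}$ and then invokes Theorem 8 of Bally--Caramellino, whereas you exhibit $\rho$ directly via the substitution $dy=b(y)\,dt$ along the (monotone, equilibrium-avoiding) one-dimensional orbits and Fubini, the required integrability coming from $\int m(dx)f(x)\int_0^\infty e(\Delta(x),t)\,dt=1$. For the absolute continuity assertion your argument is complete and arguably more transparent, since it produces the density in closed form.

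The gap is in the continuity step, and it sits exactly where you flagged it: the set of $x$ with $\Delta(x)=y_0$. If the pushforward of $f\,dm$ under $\Delta$ charges the point $y_0\in{\mathcal A}$, then dominated convergence does not just become delicate --- it fails, and so does the conclusion: since $J(\Delta(x))$ is the \emph{half-open} interval with closed endpoint $\Delta(x)$, every such $x$ contributes $e(\Delta(x),0)=1$ on one side of $y_0$ and $0$ on the other, so $\rho$ has a genuine jump of size $\frac{1}{|b(y_0)|}\int_{\{\Delta=y_0\}}f\,dm>0$ at $y_0$. No one-sided limit or monotonicity argument can restore two-sided continuity there. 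This is not a pathological scenario: in the house-of-cards case that motivates the paper, $\Delta(x)=x+a(x)\equiv 0$ for all $x$, so $\Delta_{\#}(f\,m)=m(f)\,\delta_0$, and if $b(0)\neq 0$ the point $0$ lies in ${\mathcal A}$ and $\rho$ jumps there by $m(f)/|b(0)|$. (The paper itself acknowledges this Dirac-at-$0$ phenomenon in Section \ref{sec:23}, which is why Theorem \ref{theo:invmeasure} works on a set bounded away from $0$; and in the paper's own proof of Corollary \ref{theo:4} the same obstruction reappears as the boundary term $-g(\Delta(x))/b(\Delta(x))$, which only yields that $m'$ is a finite measure, i.e.\ a BV density, not an $L^\infty$ derivative.) To complete your proof you must either add the hypothesis that $\Delta_{\#}(f\,m)$ has no atoms in ${\mathcal A}$ (equivalently, restrict the continuity claim to ${\mathcal A}$ minus the atoms of $\Delta_{\#}(f\,m)$, e.g.\ away from the reset point), or accept the weaker conclusion that $\rho$ is of bounded variation, continuous off an at most countable set.
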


\begin{proof}
Suppose that $ {\mathcal A} \neq \emptyset $ (otherwise, we do not have to prove anything), and let $ g$ be a smooth test function having compact support included in $ \{ x : |b(x) | \geq \varepsilon \} \subset {\mathcal A},$ for some fixed $ \varepsilon > 0.$  We obtain
\begin{eqnarray}\label{eq:dim1}
m ( g'  ) &=& \int_{\R} m ( dx) f(x)  \int_0^\infty \frac{e( \Delta(x) , t)}{b ( \gamma_t ( \Delta(x)  ))} g'  ( \gamma_t ( \Delta (x) ) ) \frac{d \gamma_t ( \Delta (x)  )}{dt }  dt  \nonumber \\
&=&\int_{\R} m ( dx) f(x)   \int_0^\infty \frac{e (\Delta (x) , t)}{b  ( \gamma_t ( \Delta (x) ))} \frac{d}{dt} \left( g ( \gamma_t (\Delta (x)  ) ) \right) dt ,
\end{eqnarray}
where $ \Delta ( x) = x + a (x) .$ Integration by parts yields that 
\begin{multline*}
 \int_0^\infty \frac{e (\Delta(x) , t)}{b  ( \gamma_t ( \Delta(x) ))} \frac{d}{dt} \left( g ( \gamma_t (\Delta(x) ) ) \right) dt   \\
= [ \frac{e (\Delta(x) ,t)}{b ( \gamma_t (\Delta(x) ) ) }  g ( \gamma_t (\Delta(x) ) ]_0^\infty - \int_0^\infty \frac{d}{dt}\left[ \frac{e (\Delta(x) , t)}{b ( \gamma_t (\Delta(x) )  ) }\right]  g ( \gamma_t (\Delta(x) ) ) =: L + R.
\end{multline*}
Due to the support property of $g$ and of the fact that $e (\Delta(x), \infty ) = 0 ,$ by Assumption \ref{ass:2}, the left hand side equals
$$ L = - \frac{g( \Delta(x) ) }{b( \Delta(x))} ,$$
and the right hand side 
$$ R = \int_0^\infty e (\Delta(x) , t) \left[ \frac{f ( \gamma_t ( \Delta(x) ) }{b( \gamma_t (\Delta(x) ) } + \frac{ b' ( \gamma_t (\Delta(x) ) }{b( \gamma_t (\Delta(x) ) }\right] g ( \gamma_t (\Delta(x) ) )  dt .$$
Write $S_g$ for the support of $g. $ Since $ | b (\cdot ) | \geq \varepsilon $ on $S_g,$ we can upper bound
$$ | L | \le \frac{\| g\|_\infty}{\varepsilon}  \mbox{ and } | R | \le \frac{ \sup_{ x \in S_g }( f(x) + | b' (x) |)  }{\varepsilon  } \| g \|_\infty  \int_0^\infty e (\Delta(x) , t) dt .$$
Coming back to \eqref{eq:dim1}, we obtain
\begin{equation}\label{eq:IPPdim1}
 | m ( g') | \le C( \varepsilon) \| g\|_\infty \left( 1 + \int m(dx) f(x)  \int_0^\infty e (\Delta(x) ,  t) dt  \right) = 2 C( \varepsilon) \| g\|_\infty  ,
\end{equation}
since $ \int m(dx) f(x)  \int_0^\infty e (\Delta(x),  t) dt  = m(1) = 1.$ It is well known that \eqref{eq:IPPdim1} implies the existence of a continuous Lebesgue density of $m$ locally on $ {\mathcal A}, $ see e.g.\ Theorem 8 of  Bally and Caramellino (2011) \cite{Bally-Caramellino}. 
\end{proof}

Of course, in the multidimensional case, the above approach does not apply any more, since the noise present in one single jump event is not enough to generate $N-$dimensional noise in any direction of the space. We will show in Section \ref{sec:3} below how to use $N$ successive jump times in order to create a Lebesgue density also in dimension $N.$ But before doing so, we continue the above investigation and show how to obtain at least some regularity properties of the invariant density of a single particle within the configuration, based on the noise within the jump times.

\subsection{Smoothness of the invariant density of a single particle}\label{sec:23}
We exploit \eqref{eq:useful} to prove the regularity of the invariant density of a single particle when the whole system evolves in dimension $N.$ This regularity will be expressed explicitly depending on the smoothness of the underlying jump rate functions $f_i$ and the underlying drift vector $b.$ We work under the following additional assumptions.

\begin{ass}\label{ass:5}
1. ``No interactions in the flow'' : The drift vector is given by $b (x) = (\tb ( x^1 ) , \ldots , \tilde b ( x^N) ) $ where $\tilde b : \R \to \R$ is at least one times differentiable having a bounded derivative. \\
2. The jump functions are given by $ a_i (x) = (a_i^1 (x^1) , \ldots , a_i^{i-1} ( x^{i-1} ) , - x^i , a_i^{i+1} (x^{i+1} ) , \ldots , a_i^N (x^N) ) ,$ for all $ 1 \le i \le N,$ where all $ a_i^j : \R \to \R$ are infinitely differentiable having bounded derivatives of any order. Moreover, for all $ i \neq j , $ $ \R \ni v \to \Delta_i^j  (v) := v + a_i^j (v) $ is invertible with 
$$ \inf_{v \in \R} | \frac{d \Delta_i^j (v)}{d v } |  \geq a > 0$$
Finally, we suppose that 
$$ A := \max_{i \neq j } \sup_{k \geq 0} \sup_{v \in \R } | \frac{ d^k a_i^j (v)}{d v^k }  | < \infty .$$
3. For all $ 1 \le i \le N, $ $f_i ( x) = f_i (x^i ) , $ for all $x \in \R^N .$ 
\end{ass}

In this case, we can introduce the marginal flow for any single particle which is given by $ \tilde \gamma_{s, t } ( v) \in \R ,$ for $ 0 \le s \le t, $  solution of 
$$ \tilde \gamma_{s, t }  (v) = v + \int_s^t \tb ( \tilde \gamma_{s, u }(v) ) du .$$  
We write $ \tilde \gamma_t (v) := \tilde \gamma_{0, t } ( v) $ for the flow starting from $ v$ at time $0.$
Moreover, for any fixed $ v\in \R , $ we write $ \tilde \gamma^+ ( v) = \{ \tilde \gamma_t ( v) , t \geq 0 \}. $ 
We write ${\mathcal E} = \{ v^*  : \tb ( v^* ) = 0 \} $ for the set of  equilibrium points of  $\tilde \gamma $ and observe that, due to the linear growth property of $\tb, $ if $ v \notin {\mathcal E}, $ then also $ \tilde \gamma^+ ( v) \cap {\mathcal E } = \emptyset.$ (This is certainly well-known in the theory of one-dimensional dynamical systems, but we provide a short proof in the Appendix, see Section \ref{sec:flow}.)

We consider the following H\"older classes for jump rate functions and the drift function, for arbitrary constants $ F,B  > 0.$ 
\begin{equation}
 H( k ,F) = \{ f \in C^k ( \R , \R_+) : | \frac{d^l}{d v^l } f(v) | \leq F , \mbox{ for all }  0 \le l \le k, v \in \R  \} 
\end{equation} 
and 
\begin{equation}
 H( k+1 ,B) = \{ \tb \in C^{k+1} ( \R , \R ) : | \frac{d^l}{d v^l } \tb (v) | \leq B , \mbox{ for all }  0 \le l \le k+1 , v \in \R \} . 
\end{equation}

Our study is based on the following considerations. We start with the representation of the marginal law of the first particle in the invariant regime
\begin{equation} E_m ( g' ( X_t^1))  =\sum_{i=1}^N \int_{\R} m ( dx)  f_i ( x) \int_0^\infty  e^{ - \int_0^t  \bar f ( \gamma_s ( \Delta_i ( x) ) ds ) }    g'( \tilde \gamma_t (\Delta^1_i ( x)  ) ) dt ,
\end{equation}
where $m$ is the invariant measure of the system, for any smooth test function $g : \R \to \R. $ We then use  integration by parts with respect to $t$ within the integral expression 
\begin{multline*}
\int_0^\infty  e^{ - \int_0^t\bar f ( \gamma_s ( \Delta_i ( x) ) ds ) }    g' ( \tilde \gamma_t (\Delta^1_i ( x)  ) ) dt \\
= \left[ e^{ - \int_0^t \bar f ( \gamma_s ( \Delta_i ( x) ) ds ) }    \frac{g ( \tilde \gamma_t (\Delta^1_i ( x)  ) )}{\tb ( \tilde \gamma_t ( \Delta^1_i ( x) ) ) } \right]_{t= 0 }^{t= \infty } \\
- \int_0^\infty \frac{d}{dt} \left[ \frac{  e^{ - \int_0^t \bar f ( \gamma_s ( \Delta_i ( x) ) ds ) }}{{\tb ( \tilde \gamma_t ( \Delta^1_i ( x) ) ) } }\right] g ( \tilde \gamma_t ( \Delta^1_i ( x) ) ) dt ,
\end{multline*}
i.e.\ we exploit the smoothness of the flow as a function of time. In some sense, in doing so, we are close to the approach using the weak H\"ormander condition in diffusion theory, since we work with the drift of the system. This is why we will have to restrict our study to parts of the state space which are sufficiently far away from ${\mathcal E}, $ the set of all equilibrium points of the flow.  Moreover, the integration by parts gives rise to two border terms. The term corresponding to $ t = \infty $ disappears since $e^{ -\int_0^\infty \bar f ( \gamma_s (y) ds } = 0 , $ for all fixed $y.$  But the term corresponding to $t= 0 $ does not necessarily disappear and is given by 
$$  - \frac{ g ( x^1 + a^1_i ( x) ) }{\tb ( x^1 + a^1_i(x) ) }.$$
If $i = 1, $ i.e.\ if particle $1$ has just jumped, and if the transitions are given as in \eqref{eq:afterjump1}, then the above expression equals 
$$  - \frac{ g ( x^1 + a^1_1 ( x) ) }{\tb ( x^1 + a^1_1(x) ) } =  - \frac{ g ( 0 ) }{ \tb ( 0 ) },$$
creating a Dirac measure in $0 .$ The only way to prevent this fact is to suppose that $ g(0) = 0 .$ 

To summarize this discussion, we have to stay away from equilibrium points and from $0.$ It is for this reason that we restrict our study to the following open set defined by 
$$ S_{d , k+2 } = \{ v \in \R : (k+2)  A  < |v|  ,\;  | \tb ( v) | > d  \} ,$$
where $k$ is the smoothness of the fixed classes $ H( k, F)  $ and $ H( k+1, B),$ where $A$ comes from Assumption \ref{ass:5} and where $ d$ is such that $d >( k+2) A B  .$

\begin{theo}\label{theo:invmeasure}
Grant Assumptions \ref{ass:1}--\ref{ass:3} and \ref{ass:5}. Suppose that $f_i \in H( k , F) , $ for all $ 1 \le i \le N,$ and that $ \tb \in H(k+1 , B) .$ Fix some $1 \le i \le N$ and let 
$$\pi   := {\mathcal L}_m  ( X_t^i) $$
be the marginal law of the $i-$the particle in the invariant regime, i.e.\ $ \int g d  \pi = E_m ( g( X^i_t)   ) .$ 
Then $\pi $ possesses a bounded continuous Lebesgue density $p_\pi $ on $S_{d , k +2 } $ for any $d$ such that $d > (k+2)  A B .$ The density $ p_\pi $ is bounded on $S_{d , k +2  } ,$ uniformly in $f_i \in H( k ,F)$ and $ \tb \in H( k+1, B) .$  
Moreover, $p_\pi \in C^k ( S_{d , k +2 })  $ and 
$$ \sup_{\ell \le k , v \in S_{d , k+2 } } |  p_\pi^\ell ( v)  | + \sup_{v \neq v' , v, v' \in S_{d, k+2 }} \frac{p_\pi^{(k)} (v) - p_\pi^{(k)} (v') }{|v-v'|^\alpha } \le C ,$$ 
where the constant $C $ depends on $d,$ on $A$ and $a,$ and on the smoothness classes $ H( k, F )$ and $ H(k+1 , B),$ but on nothing else. 
\end{theo}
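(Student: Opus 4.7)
The plan is to establish the weak bound $|E_m(g^{(k+1)}(X_t^i))|\le C\|g\|_\infty$ for every $g\in C_c^\infty(S_{d,k+2})$, from which the Bally--Caramellino criterion (Theorem 8 of \cite{Bally-Caramellino}, already invoked in the proof of Corollary \ref{theo:4}) produces a bounded continuous density of class $C^k$; the same scheme carried one order further will yield the H\"older modulus of $p_\pi^{(k)}$. The starting point is the marginal version of \eqref{eq:useful}: Assumption \ref{ass:5} decouples the flow into scalar components, so $\gamma_t^i(\Delta_j(x))=\tilde\gamma_t(\Delta_j^i(x))$ and
\begin{equation*}
E_m(g^{(k+1)}(X_t^i))=\sum_{j=1}^N\int m(dx)\,f_j(x)\int_0^\infty e(\Delta_j(x),t)\,g^{(k+1)}(\tilde\gamma_t(\Delta_j^i(x)))\,dt.
\end{equation*}
The key identity $g^{(\ell+1)}(\tilde\gamma_t(v))=[\tb(\tilde\gamma_t(v))]^{-1}\partial_t g^{(\ell)}(\tilde\gamma_t(v))$ --- valid since $|\tb|\ge d>0$ on the orbits reached from $S_{d,k+2}$ --- then allows $k+1$ integrations by parts in $t$ that transfer all derivatives off $g$.

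At each of the $k+1$ iterations the boundary term at $t=\infty$ vanishes by Assumption \ref{ass:2} ($e(\Delta_j(x),\infty)=0$), and the boundary term at $t=0$ has the form $c_\ell\,g^{(\ell)}(\Delta_j^i(x))/[\tb(\Delta_j^i(x))]^{p_\ell}$ up to a bounded factor. For $j=i$ this evaluates $g^{(\ell)}$ at $0$, which vanishes because $0\notin S_{d,k+2}$; for $j\ne i$, the point $\Delta_j^i(x)=x^i+a_j^i(x^i)$ either falls outside $\mathrm{supp}\,g$ (trivial) or lies inside $S_{d,k+2}$, where $|\tb|\ge d$ provides the required lower bound. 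The thresholds $|v|>(k+2)A$ and $d>(k+2)AB$ in the definition of $S_{d,k+2}$, together with $\sup_v|a_j^i(v)|\le A$ and the Lipschitz constant $B$ of $\tb$, are calibrated precisely so that these inequalities propagate through all $k+1$ iterations without degradation.

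What remains after the IBPs is a sum $\sum_j\int m(dx)\,f_j(x)\int_0^\infty H_j(x,t)\,g(\tilde\gamma_t(\Delta_j^i(x)))\,dt$, where $H_j$ is an explicit polynomial in $e(\Delta_j(x),\cdot)$, $\bar f(\gamma_\cdot)$, $\tb^{(\ell)}(\tilde\gamma_\cdot)$ for $0\le\ell\le k+1$ and $f_j^{(\ell)}$ for $0\le\ell\le k$, divided by a positive power of $\tb(\tilde\gamma_t(\Delta_j^i(x)))$; the assumptions $f_i\in H(k,F)$ and $\tb\in H(k+1,B)$ combined with $|\tb|\ge d$ bound $H_j$ uniformly in the admissible classes. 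Changing variables $v=\tilde\gamma_t(\Delta_j^i(x))$ with Jacobian $\tb(v)$ rewrites the integral as $\int g(v)K(v)\,dv$ with a uniformly bounded kernel $K$, giving the weak bound. The H\"older modulus of $p_\pi^{(k)}$ is extracted from the modulus of continuity of $K$, inherited from $t\mapsto\tilde\gamma_t$ (via Gronwall), from the exponential survival factor, and from $\tb^{(k)}$ and $f_j^{(k)}$. The main obstacle I anticipate is the combinatorial bookkeeping of the $k+1$ iterated integrations by parts --- tracking the negative powers of $\tb\circ\tilde\gamma$ alongside the iterated derivatives of the survival exponential uniformly in the H\"older classes --- which is presumably why the detailed proof is deferred to the Appendix.
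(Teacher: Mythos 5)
Your overall strategy coincides with the paper's: iterate integration by parts with respect to the jump time $t$ using $\partial_t\bigl(g^{(\ell)}(\tilde\gamma_t(v))\bigr)=\tb(\tilde\gamma_t(v))\,g^{(\ell+1)}(\tilde\gamma_t(v))$, kill the $t=\infty$ boundary by Assumption \ref{ass:2}, kill the $j=i$ boundary at $t=0$ because $g$ vanishes near $0$, and conclude with Theorem 8 of \cite{Bally-Caramellino}. But there is a genuine gap in your treatment of the $t=0$ boundary terms for $j\neq i$. After one integration by parts this boundary term is
$$-\sum_{j\neq i}\int m(dx)\,f_j(x)\,\frac{g^{(k)}(\Delta_j^i(x^i))}{\tb(\Delta_j^i(x^i))}\,H(\Delta_j(x)),$$
which still carries a \emph{derivative} of $g$ integrated against the unknown measure $m$. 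Saying that ``$|\tb|\ge d$ provides the required lower bound'' only controls the denominator; it does not reduce $g^{(k)}$ to $g$, and bounding this term by $\|g^{(k)}\|_\infty$ destroys the weak estimate $|\pi(g^{(k+1)})|\le C\|g\|_\infty$ you need. Moreover this boundary term has no time variable left in it, so it cannot be integrated by parts again as it stands.

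The missing idea --- and the heart of the paper's Appendix --- is to re-inject the stationarity identity \eqref{eq:useful} into each such boundary term: writing $\int m(dx)\,\Psi(x)=\sum_l\int m(dx)f_l(x)\int_0^\infty e(\Delta_l(x),t)\,\Psi(\gamma_t(\Delta_l(x)))\,dt$ restores a time integral, turning the boundary term into a functional of the form $\pi_{G_j(H)}\bigl([g\circ\Delta_j^i]'\bigr)$ that can itself be integrated by parts. Iterating produces the tree over $\mathcal V$ in \eqref{eq:tree}, with composed test functions $g\circ\Delta^1_{\mathbf v}$; this is exactly why $S_{d,k+2}$ is defined with the margins $(k+2)A$ and $d>(k+2)AB$, so that $(\Delta^1_{\mathbf v})^{-1}(S_{d,k+2})\subset S_{d-(k+1)AB,1}$ keeps $|\tb|$ bounded below through all iterations, and why the invertibility bound $\inf_v|(\Delta_i^j)'(v)|\ge a$ from Assumption \ref{ass:5} is needed for the final change of variables $s=\Delta^1_{\mathbf v}(\tilde\gamma_t(\cdot))$. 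Without this re-injection step your argument stalls after the first integration by parts.
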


\begin{rem}
The above assertion remains true replacing $S_{d, k+2} $ by any set $S ( k+1, \varepsilon ) $ such that for all $0 \le m \le k+1 , $ for all choices $ i_1, \ldots , i_m \in \{ 1, \ldots , N \} \setminus \{ i \} , $ 
\begin{equation}
(\Delta_{i_1}^i \circ \ldots \circ \Delta_{i_m}^i)^{-1} ( S ( k+1, \varepsilon )  ) \subset \{ v \in \R : |\tb ( v) | > \varepsilon , |v| > \varepsilon \} .
\end{equation}
\end{rem}

The proof of this theorem follows the same ideas as those used in the proof of Theorem \ref{theo:4}. It is given in the Appendix.

\section{Lebesgue density in dimension $N$}\label{sec:3}
In the present section,  we come back to the study of the invariant measure $m$ of the whole particle system $ X_t = ( X_t^1, \ldots , X_t^N) .$ As argued before, it is sufficient to consider the jump chain $ Z_k = X_{T_k - } .$ For this jump chain, we show how several, typically $N,$ ``favorable'' transitions can create Lebesgue density in dimension $N,$ despite the partial degeneracy of the transition kernel $Q.$ In order to do so, we introduce skeletons for our process.

\subsection{Notations}
To fix notation, for any $n, m $ and any smooth function $ f : \R^n \to \R^m , $ given by 
$$  f = \left(
\begin{array}{l}
 f^1 ( x^1, \ldots , x^n ) \\
\vdots \\
f^m ( x^1, \ldots , x^n ) 
\end{array}
\right) , $$ 
we shall write 
$$ \frac{\partial f}{\partial x } = \left( \frac{ \partial f^i (x) }{\partial x^j } \right)_{ 1 \le i \le m , 1 \le j \le n }  \in \R^{ m \times n }, $$
which is the Jacobian matrix of $f.$  We shall also use the notation $ \check x^i = (x^1, \ldots , x^{i-1}, x^{i+1}, \ldots , x^n) $ for any $ 1 \le i \le n$ and $x = ( x^1 , \ldots , x^n) \in \R^n.$  

\subsection{Skeletons}
Fix $ n \geq 1 $ and let $ t_1, \ldots , t_n , t_{n+1} \in \R_+ $ be a succession of jump times and $ i_0, \ldots , i_n \in \{ 1, \ldots , N\} $ a succession of indices of jumping particles. We shall write shortly $ \tt = ( t_1, \ldots , t_{n+1}  ) $ and $ \ii = ( i_0, \ldots , i_n ).$ 
Finally, we write $ s_k = t_1 + \ldots + t_k , $ for any $ 1 \le k \le n +1.$ 

For any fixed $y \in \R^N, $ we introduce the sequence of configurations $y_k  = y_k (t_1^k ),  1 \le k  \le n ,$ and $ x_k = x_k ( t_1^k) , 1 \le k \le n ,$ by
\begin{equation}\label{eq:afterjump}
y_0 = y ,  x_0 = \Delta_{i_0 } (y) , \; y_{k+1} =\gamma_{t_{k+1}} (x_k )  ,  x_{k+1} =  \Delta_{i_{k+1}} ( y_{k+1}  ) , 1 \le k < n ,
\end{equation}
which are the possible positions of the process just before and just after a jump occurring at time $ s_{k+1}, $ by a particle with index $i_{k+1}.$  In the particular case $ t_k  = 0,$ we have $ x_k = \Delta_{i_k} ( x_{k-1} ) .$ We associate to this sequence the skeleton of our process 
$$ \eta_{x_0, \tt , \ii } (t) := 
\left\{
\begin{array}{ll}
\gamma_t ( x_0) ,& t < t_1 \\
\gamma_{t - s_k } ( x_k ) ,& s_k \le t < s_{ k +1 } , k < n \\
\gamma_{t - s_n } ( x_n ) ,& t \geq s_n 
\end{array}
\right\}  .$$ 
In particular, we will be interested in $ \eta_{x_0, \tt , \ii  } (s_{n+1} ) = \gamma_{t_{n+1} } ( x_n )  $ which is a possible configuration for $ Z_{n+1} , $ starting from $ Z_0 = y ,$ when we have imposed jumps at times $0, t_1 , \ldots , t_n$ by particles with indices $i_0, \ldots , i_n.$

Recall that $ \Delta_i ( x) = x + a_i (x) $ is the configuration after a jump of particle $i.$ Let
\begin{equation}\label{eq:A}
A^i ( x) = \left( \frac{\partial (\Delta_i^k ( x) )}{\partial x^l }(x) \right)_{ 1 \le k, l  \le N} .
\end{equation}
Introduce moreover $Y_t (x)  = \frac{\partial \gamma_t (x) }{\partial x } .$ Notice that $Y_t $ is solution of 
\begin{equation}\label{eq:Y}
Y_t ( x) = Id + \int_0^t \dot b ( \gamma_s (x) ) Y_s ( x) ds ,
\end{equation}
where 
$$
\dot b (x) = \frac{ \partial b }{\partial x} (x) =  \left( \frac{\partial b^k ( x) }{\partial x^l} \right)_{ 1 \le k , l \le N} .
$$
It is well known that under our conditions of linear growth on $ b, $ $Y_t (x) $ is invertible for all $ x \in \R^N $ and for all $ t \geq 0, $ having inverse matrix $ Z_t ( x) $ solution of 
\begin{equation}\label{eq:Z}
Z_t ( x) = Id - \int_0^t \dot b ( \gamma_s (x) ) Z_s ( x) ds .
\end{equation}

Then for $ 1 \le k \le n ,$ 
\begin{equation}\label{eq:abl1}
\frac{\partial \eta_{x_0, \tt , \ii  } (s_{n+1} )}{\partial t_k}  = 
\frac{ \partial \gamma_{t_{n+1} }}{\partial x }  (x_n )  \frac{\partial x_n }{\partial t_k}  = Y_{t_{n+1}} (n )  \frac{\partial x_n }{\partial t_k}  \mbox{ and }   \frac{\partial \eta_{x_0, \tt, \ii  } (s_{n+1} )}{\partial t_{n+1}} = b (\gamma_{t_{n+1}  } ( x_n ))  .
\end{equation}
Moreover, 
\begin{equation}\label{eq:abl2}
 \frac{\partial x_n}{\partial t_k} =  A^{i_n}  ( \gamma_{t_n} (x_{n-1} ) ) \frac{\partial \gamma_{t_n}}{\partial x } ( x_{n-1}) \frac{\partial x_{n-1} }{\partial t_k } = A^{i_n}  ( \gamma_{t_n} (x_{n-1} ) ) Y_{t_n}  ( x_{n-1}) \frac{\partial x_{n-1} }{\partial t_k } , 
\end{equation}
for all $ k < n , $ and
\begin{equation}\label{eq:abl3}
 \frac{\partial x_n}{\partial t_n} =A^{i_n}  ( \gamma_{t_n} (x_{n-1} ) )    b( \gamma_{t_n  } (x_{n-1} )) .
\end{equation}


\subsection{The derivation matrix}
We introduce the $N \times (n+1)  - $matrix 
\begin{equation}\label{eq:covariancematrix}
\sigma ({x_0,\tt , \ii  } ) := \left( \partial_{t_j} \eta^i _{x_0, \tt , \ii  } (s_{n+1} )\right)_{1 \le i \le N ,  1 \le j \le n+1}
= \left( 
\begin{array}{c}
(\nabla_t \eta^1_{x_0, \tt , \ii  } (s_{n+1} ))^T\\
\vdots\\
(\nabla_t \eta^N _{x_0, \tt , \ii} (s_{n+1} ))^T
\end{array}
\right). 
\end{equation}

Now, let $G$ be a smooth test function and fix $ n $ and $ \ii .$ Then a simple calculus shows that 
\begin{equation}\label{eq:change1}
 \nabla_t ( G \circ  \eta_{x_0, \tt, \ii}  (s_{n+1} ) ) =  (\sigma ({x_0, \tt , \ii } ) )^T  \; (\nabla_x G )  ( \eta_{x_0, \tt , \ii}  ( s_{n+1} ))  .
\end{equation}

Therefore we are interested in criteria ensuring that $ \sigma \sigma^T ( x_0, \tt, \ii ) $ is not degenerate. 
Using \eqref{eq:abl1}--\eqref{eq:abl3}, we obtain the following explicit representation of $\sigma ( x_0, \tt , \ii ) .$
\begin{equation}\label{eq:sigmaexplizit}
(\sigma ( x_0 , \tt, \ii ))^T  = 
\left( 
\begin{array}{l}
\left[ Y_{t_{n+1} } (x_n ) A^{i_n} ( \gamma_{t_n} ( x_{n-1} ) ) \cdots Y_{t_2} ( x_1) A^{i_1} ( \gamma_{t_1} ( x_0) ) b ( \gamma_{t_1} ( x_0 ) ) \right]^T \\
\left[ Y_{t_{n+1} } (x_n ) A^{i_n} ( \gamma_{t_n} ( x_{n-1} ) ) \cdots Y_{t_3} ( x_2) A^{i_2} ( \gamma_{t_2} ( x_1) ) b ( \gamma_{t_2} ( x_1 ) ) \right]^T\\
\vdots \\
\left[ Y_{t_{n+1} } (x_n ) A^{i_n} ( \gamma_{t_n} ( x_{n-1} ) )  b ( \gamma_{t_n} ( x_{n-1}  ) ) \right]^T\\
\left[b ( \gamma_{t_{n+1} } ( x_n ) ) \right]^T
\end{array}
\right) .
\end{equation}

This explicit form of $\sigma (x_0, \tt , \ii ) $ motivates the following definition.

\begin{defin}\label{def:good}
Fix $n \geq 1 $ and $ i_0, \ldots , i_n $ a sequence of indices. We introduce for all  $t_1, \ldots , t_{n+1} $ the following vector fields
$$ V_1( \tt, \ii)  = b ( \gamma_{t_{n+1} } ( x_n ) ) , V_2 ( \tt, \ii)  = Y_{t_{n+1} } (x_n ) A^{i_n} ( \gamma_{t_n} ( x_{n-1} ) )  b ( \gamma_{t_n} ( x_{n-1}  ) )  , \ldots , $$
$$ V_{n+1} (\tt, \ii) = Y_{t_{n+1} } (x_n ) A^{i_n} ( \gamma_{t_n} ( x_{n-1} ) ) \cdots Y_{t_2} ( x_1) A^{i_1} ( \gamma_{t_1} ( x_0) ) b ( \gamma_{t_1} ( x_0 ) ) ,$$
where $x_0, \ldots , x_n $ are chosen as in \eqref{eq:afterjump}.

We say that $ ( n +1 , \tt, \ii ) $ is {\bf good} if $ \{ V_1 ( \tt , \ii ) , \ldots , V_{n+1} ( \tt , \ii ) \}  $ spans $ \R^N $ 
 for all $ y \in \R^N.$ 
 \end{defin}

We given an example which is a system of interacting neurons, as considered in \cite{bresiliens} and \cite{pierre} where we can exhibit explicit sequences $ \ii $ such that $ (n+1, \tt , \ii ) $ is good for all $\tt .$  

\begin{ex}\label{ex:neuro}
In our example, $X_t^1, \ldots , X_t^N$ model the height of the membrane potential of $ N$ neurons. The model assumptions are as follows. For all $ 1 \le i \le N, $
let $ b^i  ( x) = - \lambda ( x^i - v^* ) , 1 \le i \le N, $ for some $ \lambda > 0 $ and $ v^* > 0 .$ This means that each membrane potential is attracted at exponential speed $ \lambda $ to a resting potential value $v^*.$  Let $ W_{i \to j } \in \R_+ $ for all $   1 \le i, j \le N ;  $ we interpret $ W_{i \to j } $ as synaptic weight of neuron $i$ on neuron $j.$ We  suppose that 
$$ x+ a_i (x) = \left( 
\begin{array}{c}
x^1 + W_{i \to 1 }  \\
\vdots \\
x^{i-1} + W_{i \to i-1} \\
0 \\
x^{i+1} + W_{i \to i+1} \\
\vdots\\
x^N + W_{i \to N} 
\end{array}
\right) , $$
i.e.\ $ a_i^j (x) = W_{i \to j } , $ for $ i \neq j, $ and $ a_i^i ( x) = - x^i.$ Let $n = N -1$ and take $i_k = k, 0 \le k \le N - 1.$ Then it is easy to see that, for fixed $ t_1 , \ldots , t_N,$  
$$ x_{i-1} = \left( 
\begin{array}{c}
W_{i \to 1} + \sum_{ k = 2}^{i-1} e^{ - \lambda ( t_k + t_{k+1} + \ldots + t_{i-1} ) } W_{ k \to 1 } + ( 1 - e^{ - \lambda (t_1 + \ldots + t_{i-1} ) } )v^*  \\
W_{i \to 2} + \sum_{ k = 3}^{i-1} e^{ - \lambda ( t_k + t_{k+1} + \ldots + t_{i-1} ) } W_{ k \to 2 } + ( 1 - e^{ - \lambda (t_2 + \ldots + t_{i-1} ) } )v^* \\
\vdots\\
W_{i \to i-1} + ( 1 - e^{ - \lambda t_{i-1} } ) v^* \\
0 \\
*\\
* \\
* 
\end{array}
\right) .$$ 
In particular, the only coordinate depending on $t_1 $ is the first one, the only two coordinates depending on $t_2 $ are the first two, and so on. It is then easy to see that the $k$th column of the derivation matrix is given by 
$$ \frac{\partial \eta_{x_0, t_1^{N} , i_0^{N-1} } (s_N)  }{\partial t_k } = \left( 
\begin{array}{c}
*\\
*\\
*\\
\lambda v^* e^{ - \lambda ( s_N - s_{k-1} ) } \\
0 \\
\vdots \\
0 
\end{array}
\right)  \longleftarrow \mbox{ coordinate } k .
$$ 
Therefore,
$$ \det \sigma ( x_0, \tt , \ii ) = \lambda^N (v^*)^N \prod_{k=1}^N e^{ - \lambda ( s_N - s_{k-1} ) }  \neq 0 $$
for all $ t_1, \ldots , t_N ,$ implying that for $ \ii =  ( 1, \ldots, N)  ,$  $ (N, \tt, \ii ) $ is good for all $\tt .$ Similar arguments apply for any $ \ii $ such that $ \{ i_0, \ldots , i_{N-1} \} = \{ 1 , \ldots , N \},$ i.e.\ each particle has jumped exactly once. 

\end{ex}

\subsection{Absolutely continuous parts of the invariant measure}\label{sec:34}

In general, it is difficult to check whether $ (n+1, \tt , \ii ) $ is good, since one has to ``solve'' explicitly the flow for all $t_1, \ldots , t_{n+1}, $ which leads to non-local criteria. For small $ t_1, \ldots , t_{n+1} , $ we can get rid of the flow in the following way. Let $ \bar x_0 = \Delta_{i_0} ( y), \bar x_1 = \Delta_{i_1} (\bar x_0 ) = \Delta_{i_1} \circ \Delta_{i_0 } (y) , \ldots , \bar x_n = \Delta_{i_n } ( \bar x_{n-1} )= \Delta_{i_n } \circ \ldots \circ \Delta_{i_0 } (y)  .$ This is the sequence of successive configurations introduced in \eqref{eq:afterjump}, for $ t_1 = \ldots = t_n = 0 .$ 

\begin{prop}
Introduce the vector fields 
$$ V_1 ( \ii ) = b ( \bar x_n) , V^2 ( \ii) =  A^{i_n } (\bar x_{n-1} ) b( \bar x_{n-1} ) , V^3 ( \ii) = A^{i_n } (\bar x_{n-1} ) A^{i_{n-1} } ( \bar x_{n-2}) b (\bar x_{n-2} ) ,\ldots $$  
and
$$  V_{n+1} ( \ii ) = A^{i_n } (\bar x_{n-1} ) \ldots A^{i_1} ( \bar x_0) b ( \bar x_0 ) .$$ 
Suppose that $ V_1 ( \ii ) , \ldots , V_{n+1} (\ii) $ span $\R^N $ for some $\ii.$ Then $(n+1, \tt , \ii ) $ is good for $ t_1, \ldots , t_{n+1} $ small enough.
\end{prop}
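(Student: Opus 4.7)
The plan is to reduce the claim to a simple continuity argument on the derivation matrix $\sigma(x_0, \tt, \ii)$. The key point is that the vector fields $V_1(\tt, \ii), \ldots, V_{n+1}(\tt, \ii)$ of Definition \ref{def:good}, which are the transposed rows of $\sigma(x_0, \tt, \ii)^T$ displayed in \eqref{eq:sigmaexplizit}, depend continuously on $\tt \in \R_+^{n+1}$ for fixed $y$ and fixed sequence $\ii$. This continuity follows because $\gamma_t(x)$, its Jacobian $Y_t(x)$, and the matrices $A^i(\cdot)$ are all continuous in their arguments: the first two by standard ODE theory under Assumption \ref{ass:1}, and $A^i$ by the smoothness of $a_i$.

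First I would verify that evaluating at $\tt = (0, \ldots, 0)$ yields exactly the vector fields $V_\ell(\ii)$ stated in the proposition. An easy induction on $k$ shows that $x_k = \bar x_k$ for every $0 \le k \le n$, using $\gamma_0(x) = x$ in \eqref{eq:afterjump}; moreover $Y_0(x) = \mathrm{Id}$ by \eqref{eq:Y}. Substituting into the formulas of Definition \ref{def:good} collapses every $Y_{t_j}$ factor to the identity and every argument $\gamma_{t_j}(x_{j-1})$ to $\bar x_{j-1}$, so that $V_\ell(\tt, \ii)\big|_{\tt = 0} = V_\ell(\ii)$ for all $1 \le \ell \le n+1$.

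The hypothesis then asserts that the $N \times (n+1)$ matrix with columns $V_1(\ii), \ldots, V_{n+1}(\ii)$ has rank $N$, so some $N \times N$ minor of it is nonzero at $\tt = 0$. By the continuity established above, this minor is a continuous function of $\tt$, hence it remains nonzero on an open neighborhood of the origin in $\R_+^{n+1}$. On this neighborhood the vectors $V_1(\tt, \ii), \ldots, V_{n+1}(\tt, \ii)$ continue to span $\R^N$, which is exactly the defining property of a good triple, so $(n+1, \tt, \ii)$ is good there.

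There is no real obstacle; this is a pure perturbation argument. The only subtle point worth noting is that the size of the neighborhood a priori depends on $y$ through $\bar x_0, \ldots, \bar x_n$, which is consistent with the purely local character of the statement; a uniform-in-$y$ version on compact sets could be obtained, if needed, by combining continuity in $y$ with a standard compactness argument.
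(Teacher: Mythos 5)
Your proof is correct and follows essentially the same route as the paper: the paper argues via continuity of $\tt \mapsto \det \sigma\sigma^T(x_0,\tt,\ii)$ together with the observation that this determinant is positive at $\tt = \0$, which is the same perturbation argument you give (your use of a nonzero $N\times N$ minor in place of the Gram determinant is an immaterial variant). Your explicit verification that the $V_\ell(\tt,\ii)$ collapse to $V_\ell(\ii)$ at $\tt=\0$ is a welcome detail the paper leaves implicit.
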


\begin{proof}
The proof follows from the continuity of $ \tt \mapsto \det \sigma \sigma^T ( x_0, \tt , \ii ) $ and the fact that, by definition, $ \det \sigma \sigma^T ( x_0, \0 , \ii ) > 0 ,$ where $ \0 $ denotes the sequence of successive times $t_1 = \ldots = t_{n+1} = 0 .$ 
\end{proof}

Recall the definition of $Z_k = X_{T_k - } $ and write $K$ for its transition kernel and $K^n $ for its $n-$fold iteration.  

\begin{theo}\label{theo:7}
Grant Assumptions \ref{ass:1}--\ref{ass:3}. 
Suppose that there exist $n$ and $\ii $ such that 
$$ \inf_{x_0 \in \R^N} \det \sigma \sigma^T ( x_0, \0 , \ii )> 0,$$
i.e.\ $V_1 (\ii) , \ldots , V_{n+1} ( \ii) $ span $ \R^N,$ uniformly in $x_0.$ Then the following Doeblin type lower bound holds. For all $z_0$ there exist $z_n \in \R^N ,$ $ \delta_1 , \delta_2  > 0 $ and $ \beta \in ]0, 1 [ $  such that  
\begin{equation}\label{eq:LD}
K^{n+1}  ( x, dy  ) \geq \beta 1_C (x)  \nu (y)  (dy),
\end{equation}
where  $ C = B_{\delta_1} ( z_0 ) $ and $\nu 
$ is a smooth probability density having compact support within $ B_{\delta_2}  ( z_{n}) .$  
\end{theo}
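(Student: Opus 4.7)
To produce the minorization I would keep only those trajectories whose next $n+1$ jumps carry the indices $i_0, \ldots, i_n$ of the hypothesis, with inter-jump times confined to a small box, and push forward Lebesgue measure in a well-chosen $N$-tuple of these times onto a neighborhood of the target point $z_n$ via the skeleton map. Fix $z_0 \in \R^N$ and set $C := B_{\delta_1}(z_0)$ for $\delta_1 > 0$ to be determined. For $x \in C$, write $x_0 := \Delta_{i_0}(x)$. Decomposing on the first $n+1$ indices and times and keeping only the term corresponding to the fixed $\ii$ of the hypothesis yields
\begin{equation*}
K^{n+1}(x, A) \;\geq\; \int_{[0,\varepsilon]^{n+1}} 1_A\bigl(\eta_{x_0, \tt, \ii}(s_{n+1})\bigr)\, p(\tt, \ii \mid x)\, d\tt,
\end{equation*}
where the joint density of the indices and inter-jump times given $Z_k = x$ is
\begin{equation*}
p(\tt, \ii \mid x) \;=\; \frac{f_{i_0}(x)}{\bar f(x)} \prod_{j=1}^{n} f_{i_j}(y_j)\, e^{-\int_0^{t_j} \bar f(\gamma_s(x_{j-1}))\, ds} \,\cdot\, \bar f(y_{n+1})\, e^{-\int_0^{t_{n+1}} \bar f(\gamma_s(x_n))\, ds},
\end{equation*}
with $x_j, y_j$ the skeleton configurations of \eqref{eq:afterjump}. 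Since the $f_i$ are continuous and strictly positive, $p(\tt, \ii \mid x) \geq c_1 > 0$ uniformly on $C \times [0, \varepsilon]^{n+1}$, provided $\delta_1, \varepsilon$ are small enough.

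Next I would exploit the rank hypothesis on $\sigma$. Because $\sigma(x_0, \0, \ii)$ has rank $N$ uniformly in $x_0$, one may pick an index set $J = \{j_1 < \cdots < j_N\} \subset \{1, \ldots, n+1\}$ such that the $N \times N$ submatrix $\sigma_J(\Delta_{i_0}(z_0), \0, \ii)$ is invertible, and by continuity together with the uniform-in-$x_0$ bound in the hypothesis one obtains $0 < c_2 \leq |\det \sigma_J(x_0, \tt, \ii)| \leq M$ on $\Delta_{i_0}(C) \times [0, \varepsilon]^{n+1}$, after further shrinking $\delta_1, \varepsilon$ if needed. Split $\tt = (\tt_J, \tt_{J^c})$. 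For every frozen $(x_0, \tt_{J^c})$, the map $\Phi_{x_0, \tt_{J^c}} \colon \tt_J \mapsto \eta_{x_0, \tt, \ii}(s_{n+1})$ is $C^1$ with Jacobian $\sigma_J$, and a quantitative inverse function theorem, using the two-sided control on $\sigma_J$ together with a common modulus of continuity, shows that the image $\Phi_{x_0, \tt_{J^c}}([0, \varepsilon]^N)$ contains a ball of fixed radius $\rho > 0$ around the point $\Phi_{x_0, \tt_{J^c}}(\tt_J^*)$, where $\tt_J^* := (\varepsilon/2, \ldots, \varepsilon/2)$. Continuity in $(x_0, \tt_{J^c})$ then allows one to pick $z_n \in \R^N$ and $\delta_2 > 0$ such that $B_{\delta_2}(z_n) \subset \Phi_{x_0, \tt_{J^c}}([0, \varepsilon]^N)$ simultaneously for all $(x_0, \tt_{J^c}) \in \Delta_{i_0}(C) \times [0, \varepsilon]^{n+1-N}$.

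Fubini followed by the change of variables $y = \Phi_{x_0, \tt_{J^c}}(\tt_J)$ in the inner $N$-dimensional integral, which converts $d\tt_J$ into $|\det \sigma_J|^{-1}\, dy \geq M^{-1}\, dy$, then yields the uniform-in-$x$ lower bound
\begin{equation*}
K^{n+1}(x, A) \;\geq\; \varepsilon^{n+1-N}\, \frac{c_1}{M}\, |A \cap B_{\delta_2}(z_n)|, \qquad x \in C,
\end{equation*}
from which the claim follows on taking $\nu$ to be any smooth probability density with compact support in $B_{\delta_2}(z_n)$ and setting $\beta := \varepsilon^{n+1-N} c_1 / (M \|\nu\|_\infty)$. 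The main obstacle is the geometric uniformity in paragraph two: one must apply the inverse function theorem with quantitative constants depending only on the uniform two-sided bounds on $\sigma_J$ and on a common equicontinuity estimate, so that a single ball $B_{\delta_2}(z_n)$ lies inside every image $\Phi_{x_0, \tt_{J^c}}([0, \varepsilon]^N)$; once this geometric step is settled, the density lower bound and the change of variable are routine.
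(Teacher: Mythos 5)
Your proof is correct and follows essentially the same route as the paper's: restrict to the event that the next $n+1$ jumps realize the fixed good sequence $\ii$ with inter-jump times in $[0,\varepsilon]^{n+1}$, lower-bound the resulting joint density uniformly on $B_{\delta_1}(z_0)\times[0,\varepsilon]^{n+1}$ by compactness and positivity of the $f_i$, and use the full rank of $\sigma$ to push Lebesgue measure in the jump times onto a fixed ball $B_{\delta_2}(z_n)$. The only difference is that where the paper delegates the last step (submersion implies a uniform ball inside the image of the time-cube) to Theorem 4.1 and Lemmata 6.2--6.3 of Bena\"{i}m et al., you carry it out explicitly via an invertible $N\times N$ submatrix of $\sigma$ and a quantitative inverse function theorem, which is a valid in-line substitute for that citation.
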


\begin{proof}
Fix $ \varepsilon > 0 $ sufficiently small such that $ (n+1, \ii , \tt ) $ is good for all $ \tt $ with $ t_1, \ldots , t_{n+1} \le \varepsilon .$  Let 
$$ E = \{ T_1 \le \varepsilon, \ldots ,  T_{n+ 1}- T_{n} \le \varepsilon ,   I_0 = i_0, \ldots, I_{n} = i_n \},$$
where $ I_k $ denotes the index of the jumping particle at time $ T_k, k \geq 0.$ Then we have for any measurable $ B \in {\mathcal B } ( \R^N ) $ and for any fixed $x,$  
\begin{multline*}
K^{n+1} ( x, B)  = P_x ( Z_{n+1} \in B ) \geq P_x ( Z_{n+1} \in B , E ) =  \frac{f_{i_0} ( x)}{\bar f (x) }  \int \delta_{ \Delta_{i_0} ( x )} ( d x_0 ) \\
\int_0^\varepsilon e( x_0, t_1) 
 f_{i_1} ( \gamma_{t_1} (x_0) ) dt_1 \int \delta_{\Delta_{i_1} ( \gamma_{t_1} (x_0) )} (dx_1)  \ldots \int_0^\varepsilon e( x_{n-1}   , t_{n}) f_{i_{n}} ( \gamma_{t_{n}} (x_{n-1}) ) dt_{n}  \\
\int \delta_{\Delta_{i_{n}} ( \gamma_{t_{n} } (x_{n-1})) } ( d x_{n} ) \int_0^\varepsilon \bar f ( \gamma_{t_{n+1} } ( x_{n} ) )  e ( x_{n} ,  t_{n+1} ) 
 1_B ( \gamma_{t_{n+1} } ( x_{n} )) dt_{n+1} \\
=  \frac{f_{i_0} ( x)}{\bar f( x) }  \int \delta_{ \Delta_{i_0} ( x )} ( d x_0 )  
 \int_0^\varepsilon d t_1 \ldots \int_0^\varepsilon d t_{n+1} q( x_0, \tt, \ii ) 1_B ( \eta_{ x_0, \tt, \ii } ( s_{n+1} )) ,
\end{multline*}
where $ \tt = (t_1, \ldots , t_{n+1} ) , $ $\ii = (i_0, \ldots , i_n ) ,$ and where 
$$ q( x_0, \tt, \ii )  =  \prod_{k=1}^{n} [e( x_{k-1}, t_{k} ) f_{i_{k} } ( \gamma_{t_k} (x_{k-1} )) ] \bar f ( \gamma_{t_{n+1} } ( x_n) )e( x_{n}, t_{n+1}).$$
Under our conditions, the mapping
$$ \frac{f_{i_0} ( x)}{\bar f ( x) }   q( x_0, \tt, \ii ) $$
is strictly lower bounded on $ x \in B_{\delta_1} ( z_0 ) , t_1, \ldots , t_{n+1} \le \varepsilon, $ for any fixed $ \delta_1  > 0 $ and $z_0.$ 

Moreover, under our assumptions, 
$$ \R_+^{n+1} \ni \tt \mapsto \eta_{ z, \tt, \ii } ( s_{n+1} ) ,$$
for any fixed $ z \in \R^N,$ is a submersion at $ \tt $ for any $ \tt $ with $t_1, \ldots , t_{n+1} \le \varepsilon, $ since the partial derivatives with respect to $t_1, \ldots , t_{n+1} $ span $\R^N .$ Put $ \tt_0 = ( \varepsilon, \ldots , \varepsilon) .$ For any fixed $ x= z_0 $ and $ x_0 = \Delta_{i_0 } ( z_0),$ write $z_{n} :=   \eta_{ x_0, \tt_0, \ii } ( \varepsilon ) .$ Then we can apply Theorem 4.1 and Lemmata 6.2 and 6.3 of Bena\"{i}m et al. (2015) \cite{micheletal}. They imply that there exist $ \delta_1 , \delta_2 > 0 $ such that 
$$ K^{n+1}  ( x, dy  ) \geq \tilde \beta 1_C (x)  1_{ B_{\delta_2} ( z_{n+1} )}(y)   (dy) ,$$
for $ C = B_{\delta_1}  ( x_0).$ Choosing a $C^\infty -$function $\tilde \nu $ 
such that $ 0 \le \tilde \nu  \le 1_{ B_{\delta_2} ( z_{n} )} , $ $ \int \tilde \nu > 0 , $ and putting $ \nu := (\int \tilde \nu ) ^{-1} \tilde \nu $  implies the desired result \eqref{eq:LD}. 
\end{proof}

\subsection{Nummelin splitting and creation of Lebesgue density for $Z_k $}\label{sec:35}
We will use the above Doeblin lower bound to introduce a splitting procedure which is inspired by the so-called {\it Nummelin splitting} introduced by Athreya and Ney (1978) \cite{athreyaney} and Nummelin (1978) \cite{nummelin}. First of all, since $(Z_k)_{k \geq 1 }$ is Harris recurrent, we may choose $z_0 $ such that $ m^Z (B_{\delta_1} ( z_0 ) ) > 0 .$ As a consequence, $ (Z_k)_k$ visits $ C = B_{\delta_1} ( z_0 ) $ infinitely often. So let 
$$ S_1 = \inf \{ n : Z_n \in C \} , \dots , S_{k+1} = \inf \{ n > S_k : Z_n \in C \} $$
be the successive visits of the set $C.$ We have $S_k < \infty $ almost surely for all $k.$ Let $(U_n)_n $ be a sequence of i.i.d.\ uniform random variables, uniformly distributed on $ [0, 1], $ independent of $ (Z_k)_k .$ Then \eqref{eq:LD} allows to write 
\begin{equation}\label{eq:splitting}
 Z_{ S_k + n+1}  \stackrel{\mathcal L}{=} 1_{ [0, \beta ] } ( U_k ) Y_k + 1_{ ] \beta , 1] } ( U_k ) W_k   , 
\end{equation} 
where $ (Y_k)_k $ are i.i.d.\ random variables, distributed $\sim \nu , $ independent of $ (Z_k)_k, $ and where 
$$ W_k \sim \frac{ 1}{1- \beta } \left( K^{n+1}  ( Z_{S_k} , dy  ) - \beta \nu (y)  ( dy )  \right) .$$ 
We will therefore use the representation \eqref{eq:splitting} and introduce the regeneration time 
$$ R = \inf \{ S_k + n +1 :  k \geq 1 ,  U_k \le \beta \} .$$
Since $(Z_k)_k $ visits $C$ infinitely often almost surely, clearly, $R < \infty $ almost surely. 
Then \eqref{eq:splitting} reads as follows : 
\begin{equation}\label{eq:splitting2}
Z_{R} \stackrel{\mathcal L}{=} \nu (y) dy  .
\end{equation}

Therefore, we have proven the following. 

\begin{cor}[Nummelin splitting and creation of Lebesgue density for $Z_k $]\label{cor:1}
Grant the conditions of Theorem \ref{theo:7}. Then there exists an extended stopping time $R$ with $R < \infty $ almost surely and such that $ {\mathcal L} (Z_{R }) $ is absolutely continuous with respect to the Lebesgue measure on $ \R^N $ having smooth density $ \nu \in C^\infty .$  
\end{cor}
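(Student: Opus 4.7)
My plan is to carry out a standard Nummelin-type regeneration argument that takes the Doeblin minorization \eqref{eq:LD} supplied by Theorem \ref{theo:7} as input. Before starting, I would pick the centre $z_0$ of the small set $C = B_{\delta_1}(z_0)$ so that it lies in the support of the invariant measure $m^Z$ of $(Z_k)_k$. This is possible because, by the proposition identifying $m^Z$ with $\bar f\, m / m(\bar f)$ together with Assumption \ref{ass:1} which gives $\bar f > 0$, any set of positive $m$-measure has positive $m^Z$-measure, and $m$ is a probability measure. By Harris recurrence of $(Z_k)_k$, the successive entrance times $S_1 < S_2 < \cdots$ to $C$ are then a.s.\ finite.

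Next I would enlarge the underlying probability space by an i.i.d.\ sequence $(U_k)_k$ of $\mathrm{Unif}[0,1]$ variables independent of $(Z_k)_k$, and work in the natural filtration jointly generated by $(Z_k, U_k)_k$. The minorization \eqref{eq:LD} allows me to realise the conditional law of $Z_{S_k + n + 1}$ given $\mathcal{F}_{S_k}$ as the $U_k$-driven mixture \eqref{eq:splitting}: on $\{U_k \le \beta\}$ one draws $Z_{S_k + n + 1}$ from the smooth density $\nu$, while on $\{U_k > \beta\}$ one draws it from the residual probability kernel $(1-\beta)^{-1}\bigl(K^{n+1}(Z_{S_k},\cdot) - \beta\,\nu(y)\,dy\bigr)$, which is a genuine probability kernel precisely because of \eqref{eq:LD}. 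The key point is that, since the $U_k$'s are independent of the whole chain, the indicators $\mathbf{1}_{\{U_k \le \beta\}}$ form an i.i.d.\ Bernoulli$(\beta)$ sequence independent of $(S_k, Z_{S_k})_k$.

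I would then set $R := \inf\{S_k + n + 1 : k \geq 1,\ U_k \le \beta\}$, an extended stopping time for the enlarged filtration. Its a.s.\ finiteness follows from a Borel--Cantelli argument: with $\beta > 0$ and the $U_k$'s i.i.d., infinitely many $k$ satisfy $U_k \le \beta$ almost surely, and pairing any such $k$ with the a.s.-finite $S_k$ produces a finite value of $R$. By construction, conditionally on $\{R = S_k + n + 1\}$ one has $Z_R \sim \nu$, and since the conditional law is the same for every $k$, summing over the disjoint decomposition $\{R < \infty\} = \bigsqcup_k \{R = S_k + n + 1\}$ yields the unconditional identity $\mathcal{L}(Z_R)(dy) = \nu(y)\,dy$, which is smooth and compactly supported by Theorem \ref{theo:7}.

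The substantive point to handle carefully is the filtration bookkeeping: one has to verify that the joint representation \eqref{eq:splitting} can be built simultaneously for all $k$ in a strong-Markov-compatible way, so that $R$ really is a stopping time of the enlarged filtration and so that the branch selection $\{U_k \le \beta\}$ genuinely yields $Z_{S_k + n + 1} \sim \nu$ on that event. Once this is in place the conclusion of the corollary is immediate, and indeed the identity \eqref{eq:splitting2} already stated just before the corollary is precisely the desired claim.
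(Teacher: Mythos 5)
Your proposal is correct and follows essentially the same route as the paper: choose the small set $C$ to have positive $m^Z$-measure, use Harris recurrence for the a.s.\ finiteness of the entrance times $S_k$, realise the Doeblin minorization \eqref{eq:LD} as the $U_k$-driven mixture \eqref{eq:splitting}, and define $R$ as the first successful coin flip, so that $Z_R\sim\nu$. The only difference is that you explicitly flag (without resolving) the joint-construction/filtration issue for the split chain, which the paper also leaves implicit.
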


It remains to prove that the smoothness created at the regeneration time $R $ is preserved, under suitable conditions, by the dynamics.  This is far from being obvious, due to the degenerate structure of the transition kernel $Q.$ However, in some cases, we are at least able to show that Lebesgue absolute continuity is preserved. 

\subsection{Preservation of Lebesgue absolute continuity}\label{sec:36}
We intend to find conditions implying that if  $Z_0 \sim p(x) dx  , $ for some measurable $p,$ then $ {\mathcal L} ( Z_{1} ) $ is also absolutely continuous with respect to the Lebesgue measure. 
Let $ g$ be a smooth test function. We write $ E_p$ for the conditional expectation, given $ Z_0 \sim p (x) dx.$ Then 
\begin{equation}\label{eq:transition}
E_p ( g ( Z_1) ) = \sum_{i=1}^N \int p(x)  \frac{f_i ( x)}{\overline f (x) }  dx  \int_0^\infty e( \Delta_i ( x) , t ) \bar f ( \gamma_t (\Delta_i ( x) ) )  g  ( \gamma_t (\Delta_i (x) ) ) dt .
\end{equation}
As explained in Example \ref{ex:neuro}, we are interested in transitions of the kind 
$$ a_i ( x) = \left( 
\begin{array}{c}
a^1_i (x^1)\\
\vdots \\
a^{i-1}_i ( x^{i-1} )\\
- x^i \\
a^{i+1}_i ( x^{i+1} ) \\
\vdots\\
a^N_i ( x^N) 
\end{array} 
\right) , \;  \Delta_i ( x) = x + a_i (x) = 
\left( 
\begin{array}{c}
x^1 + a^1_i (x^1)\\
\vdots \\
x^{i-1} + a^{i-1}_i( x^{i-1} )\\
0\\
x^{i+1} + a^{i+1}_i ( x^{i+1} ) \\
\vdots\\
x^N + a^N_i ( x^N) 
\end{array} 
\right) .
$$
$ \Delta_i ( x) $ does not depend on $x^i $ any more and has $0-$entry in the $i-$th coordinate. Therefore,  also $ \gamma_t ( \Delta_i ( x) ), $ which is the evolution of the flow after a jump of the $i-$th particle, does not depend on $x^i .$ So even if we start with an $N-$dimensional density $ p ( x) $ as in \eqref{eq:transition}, after a jump of the $i-$the particle, there is no density in direction of $e_i,$ the $i-$th unit vector of $\R^N, $ any more. 

The main idea is to replace this missing direction by the noise which is created by the jump times, i.e.\ to use the additional noise created by $t$ in \eqref{eq:transition}. This strategy works if the noise created by the exponential jump times has non zero component in direction of $ e_i.$ The following theorem relies on this idea and the coarea formula (we refer to Federer (1996) \cite{federer}) which allows to make a simple change of variables. The theorem  implies the absolute continuity of the invariant measure $m$ but does not give any regularity of the invariant density.

\begin{theo}\label{theo:8}
Grant Assumptions \ref{ass:1}--\ref{ass:3} and suppose that there exist $n$ and $\ii $ such that 
$$ \inf_{x_0 \in \R^N} \det \sigma \sigma^T ( x_0, \0 , \ii )> 0.$$
Suppose moreover that for all $ 1 \le i \le N, $ for all $ t \geq  0  $ and $x \in \R^N,$  the columns of $ Y_ t ( \Delta_i (x) ) A^i (x) $ and $ b ( \gamma_t ( \Delta_i ( x) ) $ span $\R^N.$ Then the invariant measure $m$ of the process $(X_t)_{t \geq 0} $ is absolutely continuous with respect to the Lebesgue measure.  
\end{theo}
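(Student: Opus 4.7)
The plan is to combine the regeneration structure from Corollary~\ref{cor:1} with a preservation-of-absolute-continuity argument for the jump chain $(Z_k)$, and then transport absolute continuity from $m^Z$ to $m$ through the identity $m^Z(g)=m(\bar f g)/m(\bar f)$ established earlier.

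Corollary~\ref{cor:1} provides an extended stopping time $R<\infty$ a.s.\ such that $\mathcal L(Z_R)=\nu(y)\,dy$ is absolutely continuous. The technical heart of the proof is the following preservation statement, which uses the spanning hypothesis of the theorem: \emph{if $\mathcal L(Z_k)=p_k(x)\,dx$, then $\mathcal L(Z_{k+1})$ is also absolutely continuous.} To prove it I start from \eqref{eq:transition} and, for each $1\le i\le N$, consider the map $\Psi_i\colon\R^N\times\R_+\to\R^N$ defined by $\Psi_i(x,t)=\gamma_t(\Delta_i(x))$. Its Jacobian has the $N$ columns $Y_t(\Delta_i(x))A^i_{\cdot,j}(x)$ (from $\partial/\partial x^j$) together with the column $b(\gamma_t(\Delta_i(x)))$ (from $\partial/\partial t$); the spanning hypothesis is precisely that these $N+1$ vectors span $\R^N$ at every $(x,t)$, so
\[
J_N\Psi_i(x,t):=\sqrt{\det\bigl(D\Psi_i\,D\Psi_i^T\bigr)(x,t)}>0.
\]
Setting $u_i(x,t):=p_k(x)\,\tfrac{f_i(x)}{\bar f(x)}\,e(\Delta_i(x),t)\,\bar f(\gamma_t(\Delta_i(x)))\in L^1(\R^N\times\R_+)$, the coarea formula applied to the Lipschitz map $\Psi_i$ (with $(N+1)$-dimensional source and $N$-dimensional target) rewrites the $i$-th summand of \eqref{eq:transition} as
\[
\int_{\R^N\times\R_+} u_i\,g\circ\Psi_i\,dx\,dt \;=\; \int_{\R^N} g(y)\,\rho_i(y)\,dy,\qquad \rho_i(y):=\int_{\Psi_i^{-1}(y)}\frac{u_i}{J_N\Psi_i}\,d\mathcal{H}^1,
\]
and the identity $\int\rho_i\,dy=\int u_i\,dx\,dt<\infty$ gives finiteness of $\rho_i$ a.e. Summing over $i$ shows that $\mathcal L(Z_{k+1})$ has density $\sum_i\rho_i$.

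Iterating from the base case $Z_R\sim\nu$, I obtain that $\mathcal L_\nu(Z_{R+k})=\nu K^k$ is absolutely continuous for every $k\ge 0$. For the passage to $m^Z$, I invoke the standard regenerative representation given by the splitting: letting $R_1<R_2<\ldots$ be the successive regeneration times and $\tau=R_2-R_1$ the cycle length, Kac's formula reads
\[
m^Z(A)=\frac{1}{\E_\nu[\tau]}\sum_{k\ge 0}\E_\nu\bigl[1_A(Z_{R_1+k})\,1_{\{\tau>k\}}\bigr],
\]
with $\E_\nu[\tau]<\infty$ by positive Harris recurrence. Each summand on the right is a sub-measure of the absolutely continuous measure $\nu K^k$ and is therefore absolutely continuous; since the full series has finite mass $\E_\nu[\tau]$, its sum $m^Z$ is absolutely continuous. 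Finally $m^Z=m\bar f/m(\bar f)$ together with $\bar f>0$ (Assumption~\ref{ass:1}) and $m(\bar f)<\infty$ (Assumption~\ref{ass:3}) transfers absolute continuity back to $m$, which then admits the density $m(\bar f)\,p^Z/\bar f$, where $p^Z$ denotes the density of $m^Z$.

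The main obstacle is making the coarea step fully rigorous. The uniform spanning hypothesis gives pointwise positivity of $J_N\Psi_i$, but one must check that $\Psi_i$ is Lipschitz on the effective integration region (requiring some differentiability of the $a_i$ beyond the bare Assumption~\ref{ass:1}, implicitly used already to define $A^i$) and that the fiber integrals $\rho_i(y)$ define a genuinely measurable density. The $L^1$ bound on $u_i$ combined with Fubini and the coarea identity itself handles finiteness, but the unbounded, flow-parametrized curves $\Psi_i^{-1}(y)\subset\R^N\times\R_+$ demand some care in localization.
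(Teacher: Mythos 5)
Your proposal is correct and follows essentially the same route as the paper: the same coarea-formula argument applied to the map $(x,t)\mapsto\gamma_t(\Delta_i(x))$ (whose Jacobian columns are exactly $b(\gamma_t(\Delta_i(x)))$ and the columns of $Y_t(\Delta_i(x))A^i(x)$) to show that $K$ preserves absolute continuity, followed by the Kac occupation formula over regeneration cycles and the identity $m^Z=m(\bar f\,\cdot)/m(\bar f)$ to transfer the conclusion to $m$. Your closing caveat about the implicit differentiability of the $a_i$ needed to define $A^i$ and apply the coarea formula is a fair observation that the paper also glosses over.
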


\begin{rem}
In the non-interacting case of Assumption \ref{ass:5}, the columns of $ Y_ t ( \Delta_i (x) ) A^i (x) $ and  the vector $ b ( \gamma_t ( \Delta_i ( x) ) $ span $\R^N $ if and only if $ 0 \notin {\mathcal E}, $ see Step 1.\ in the next Section \ref{sec:37}.
\end{rem}

\begin{proof} 
\hspace{0,1cm}

{\bf Step 1.} We rely on \eqref{eq:transition} and study, for any fixed $i,$ the mapping $ (x^1 , \ldots , x^N , t ) \mapsto \gamma_t ( \Delta_i ( x) ) := G( t, x) .$ Its $N-$dimensional Jacobian is given by 
$$ J_G ( t, x) = \sqrt{ \det \left( \frac{\partial G (t,x) }{\partial t \partial x} (\frac{\partial G (t,x) }{\partial t \partial x})^T\right)} .$$

But the first column of $\frac{\partial G (t,x) }{\partial t \partial x} ,$ the partial derivative with respect to time, is given by $ b ( \gamma_t (\Delta_i ( x) ) ) .$ The following columns of of $\frac{\partial G (t,x) }{\partial t \partial x} $ are precisely the columns of $Y_ t ( \Delta_i (x) ) A^i (x) .$  These columns span $ \R^N, $ by our assumptions. As a consequence, $J_G ( t, x) > 0 $ for any fixed $ (t,x) .$ We use the coarea formula and obtain, for any smooth test function $g  : \R^N \to \R_+, $ putting $ W_i ( t, x) = p(x) \frac{f_i ( x)}{\overline f (x) }  e( \Delta_i ( x), t) \bar f \circ G ( t, x) / J_G ( t, x) ,$  
\begin{multline*}
E_p ( g ( Z_1) ) = \sum_i \int_{\R^N } dx \int_0^\infty dt \; W_i ( t, x)  g  ( G(t, x)) J_G ( t, x) \\
=\sum_i \int_{\R^N } \left( \int_{ G^{-1} ( x) } W_i (z) g \circ G (z)   {\mathcal H}_{1} ( dz) \right) dx \\
= \sum_i \int_{\R^N } g ( x) \left( \int_{ G^{-1} ( x) } W_i (z)    {\mathcal H}_{1} ( dz) \right) dx ,
\end{multline*}
where $  {\mathcal H}_{1} $ is the one-dimensional Hausdorff measure and $G^{-1} ( x) = \{ (t, z) : G(t, z) = x \} .$

This shows that if $ Z_n $ is absolutely continuous with respect to the Lebesgue measure, having density $p, $ then $ Z_{n+1} $ is also absolutely continuous having density 
$$ q (x) = \sum_i  \int_{ G^{-1} ( x) } W_i (z)    {\mathcal H}_{1} ( dz) .$$ 
This implies a fortiori that $Z_{n+k }$ is absolutely continuous for all $k \geq 1$. 

{\bf Step 2.} By \eqref{eq:splitting2}, ${\mathcal L} (Z_{R }) = \nu (x) dx   $ is absolutely continuous, and $R  < \infty $ almost surely.  We introduce $ R_1 := R, R_{n+1} := R_n + R_1 \circ \theta_{R_n} $ for all $ n \geq 1,$  where $ \theta $ denotes the shift operator on the space of trajectories of $ (Z_n) _n.$ Then, by the Kac occupation formula, for any bounded test function $ g  : \R^N \to \R, $ 
$$ m^Z ( g ) = \frac{1}{ E ( R_2- R_1) }  E \left( \sum_{k=R_1}^{R_2- 1 } g ( Z_k ) \right) = \frac{1}{ E ( R_2- R_1) } \sum_{k \geq 0} E_\nu \left( g ( Z_k ) 1_{\{ k < R_1 \}} \right) ,$$
which implies that $ m^Z < \! \! < \lambda , $ and therefore also $m < \! \! < \lambda  .$ 
\end{proof}

\begin{rem}
It is important to stress here that we did not use an integration by parts formula in the above proof. 
\end{rem}

Theorem \ref{theo:8} gives the absolute continuity of the invariant measure without any further smoothness properties of the invariant density. When imposing more structure on the dynamics of the process, we are able to obtain finer results as we will show in the next subsection.

\subsection{Discussion of  the non-interacting case}\label{sec:37}
We will suppose within this subsection that there are no interactions between coexisting particles within the dynamics in the flow, i.e.\ the only interactions are produced by the jumps, as in Assumption \ref{ass:5}. Recall that we denote in this case by $ \tilde \gamma_t (v) $ the marginal flow of a single particle issued from $ v \in \R ,$ and that ${\mathcal E} = \{ v^*  : \tb ( v^* ) = 0 \} $ is the set of  equilibrium points of this flow. 

\begin{ass}\label{ass:9}
$ 0 \notin {\mathcal E} .$ 
\end{ass}
Under Assumption \ref{ass:9}, $ [0, \infty [ \ni t \mapsto \tilde \gamma_t ( 0 ) \in \R \setminus { \mathcal E} $ is invertible, and we write $ \kappa : \tilde \gamma^+ ( 0 )  \to [0,  \infty [ $  for its inverse.

We impose Assumptions \ref{ass:1}--\ref{ass:3}, Assumption \ref{ass:5} and Assumption \ref{ass:9}. We suppose moreover that  $f_i \in H(k, F) ,$ for all $ 1 \le i \le N,$ and $\tb \in H(k , B) ,$ for all $ k \geq 1 . $ We write $ {\mathcal A} = \{ x \in \R^N :  x^i \notin {\mathcal E}  , \; \; \forall 1 \le i \le N \} .$

{\bf Step 1.} First of all, by the structure of our dynamics, 
$$ A^i ( x) = diag \left( 1 +  (a_i^1)' (x^1 ), \ldots, 1 + ( a_i^{i-1})' (x^{i-1} ), 0, 1 + ( a_i^{i+1})' (x^{i+1} ), \ldots , 1 + ( a_i^{N})' (x^{N} ) \right) ,$$
where $ (a_i^j)' (v) = \frac{d a_i^j (v)  }{d v} , $ for any $ v \in \R.$ By Assumption \ref{ass:5}, any of the elements $ |1 + ( a_i^j )'(v)|  \geq a > 0 .$ Moreover, since there are no interactions in the dynamics of the flow, 
\begin{equation}\label{eq:Y}
 Y_t (x) = diag \left( y_t (x^1 ) , \ldots , y_t (x^N) \right) 
, \mbox{where 
$y_t ( v) = 1 + \int_0^t \tb' ( \tilde \gamma_s (v)) y_s (v) ds .$} 
\end{equation} 
Introducing 
\begin{equation}\label{eq:Z}
 z_t ( v) = 1 - \int_0^t \tb' ( \tilde \gamma_s ( v) ) z_s (v) ds,
\end{equation}
$Y_t (x) $ has then the inverse matrix
$
 Z_t (x) =  diag \left( z_t (x^1) , \ldots , z_t (x^N)   \right) .
$

This implies that $ ( N, \tt , \ii) $ is good for $ \ii = ( 1, 2, \ldots , N ),$ for any $ \tt .$ Thus, the conditions of Theorem \ref{theo:7} are fulfilled. Moreover, 
$$
 Y_t ( \Delta_i ( x) ) A^i (x) =
 diag  \left( 
w^1_t (x^1 ) ,     \ldots ,  w^{i-1} _t ( x^{i-1})  , 0 ,  w^{i+1} _t ( x^{i+1} )  , \ldots ,  w^N_t ( x^N) 
\right) ,
$$
where $ w^j_t (x^j ) = (1 +  (a_i^j)' (x^j )) y_t (\Delta_i^j  (x^j ) )  .$  Moreover, 
$$ b ( \gamma_t ( \Delta^i (x) ) = \left(
\begin{array}{c}
* \\
\vdots \\
* \\
\tb ( \tilde \gamma_t (0))  \\
* \\
\vdots \\
* 
\end{array} \right) 
\leftarrow \mbox{ $i-$th coordinate $\neq 0 $ },$$
implying that the columns of  $ Y_t ( \Delta_i ( x) ) A^i (x) $ and $   b ( \gamma_t ( \Delta^i (x) ) $ span $ \R^N $ for all $ t .$ 
Hence the conditions of Theorem \ref{theo:8} are fulfilled as well. We deduce from Theorem \ref{theo:7} that there exists an extended stopping time $R $ such that $ Z_{R } \sim \nu ( x) dx , $ with $ \nu \in C^\infty_c  (B_{\delta_2} (z_n) )$ for some $z_n \in \R^N.$  In the proof of Theorem \ref{theo:8} we have shown that this implies that the invariant density is absolutely continuous. In the following step we will study how the smoothness of the ``regeneration'' density $\nu$ is preserved by the dynamics. 

{\bf Step 2.} Suppose therefore that $ Z_{n}\sim \nu ( x) dx $  with $ \nu $ the ``regeneration density''.  We first show that the law of $ Z_{n + 1 } $ possesses also a Lebesgue density for which we can exhibit an explicit representation. 

In order to do so, we start with the following observation.

\begin{lem}
It is always possible to choose the regeneration density $ \nu $ of Theorem \ref{theo:7} such that $ \nu (x) / \bar f (x) = \prod_{i=1}^N r (x^i ) $ is of product form, with $r \in C^\infty_c ( \R , \R_+)  .$ 
\end{lem}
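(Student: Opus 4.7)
The plan is to exploit the flexibility left in the proof of Theorem \ref{theo:7}, which actually produces the stronger bound $K^{n+1}(x,dy) \ge \tilde\beta\, \one_C(x) \one_{B_{\delta_2}(z_n)}(y)\,dy$ before $\nu$ is specified. Any nonnegative function $\tilde\nu$ satisfying $\tilde\nu \le \tilde\beta\, \one_{B_{\delta_2}(z_n)}$ pointwise yields the Doeblin lower bound $K^{n+1}(x,dy) \ge \one_C(x)\,\tilde\nu(y)\,dy$, which after normalisation $\nu := \tilde\nu/\int \tilde\nu$ becomes exactly the conclusion of Theorem \ref{theo:7} (with $\beta$ adjusted to $\int \tilde\nu$). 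So it suffices to construct $\tilde\nu$ of the desired product form $\tilde\nu(y)=\bar f(y)\prod_{i=1}^N r(y^i)$, with $r \in C^\infty_c(\R,\R_+)$, satisfying $\mathrm{supp}(\tilde\nu) \subset B_{\delta_2}(z_n)$ and $\tilde\nu \le \tilde\beta$.

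Under Assumption \ref{ass:5}, $\bar f(y)=\sum_i f_i(y^i) \le NF$ is uniformly bounded (since $f_i \in H(k,F)$), so the pointwise bound $\tilde\nu \le \tilde\beta$ reduces to $\|r\|_\infty^N \le \tilde\beta/(NF)$, which is always achievable by rescaling $r$. Because $\bar f>0$ everywhere, the support condition $\mathrm{supp}(\tilde\nu) \subset B_{\delta_2}(z_n)$ is equivalent to $\mathrm{supp}(r)^N \subset B_{\delta_2}(z_n)$.

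The crucial step is therefore to find an interval $I = [v-\eta,v+\eta] \subset \R$ such that the product cube $I^N$, centred at the diagonal point $(v,\ldots,v)$, fits inside $B_{\delta_2}(z_n)$. To arrange this I use the remaining freedom in Theorem \ref{theo:7}: one can freely choose both the starting point $z_0$ (any point with $m^Z(B_{\delta_1}(z_0))>0$) and the skeleton times $\tt_0 \in (0,\varepsilon)^{n+1}$. Under Assumption \ref{ass:5} the skeleton $\ii=(1,\ldots,N)$ is good for every $\tt_0$ (by Step 1 of Section \ref{sec:37}), so the map $(z_0,\tt_0)\mapsto z_n = \eta_{x_0,\tt_0,\ii}(s_{n+1})$ is a submersion onto an open subset of $\R^N$. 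Moreover the non-interacting structure makes the map coordinatewise, so that each coordinate $z_n^j$ is adjusted essentially independently by varying the flow time $t_{j+1}$ following the reset of particle $j$; the reachable set therefore meets arbitrary neighbourhoods of the diagonal. Choosing $z_0$ and $\tt_0$ accordingly to bring $z_n$ within distance $\delta_2 - \eta\sqrt N$ of some diagonal point $(v,\ldots,v)$, then taking $r \in C^\infty_c(\R,\R_+)$ supported in $[v-\eta,v+\eta]$ with $\|r\|_\infty$ sufficiently small, completes the construction.

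The main obstacle is the geometric/dynamical step: a ball $B_{\delta_2}(z_n)$ produced by an arbitrary application of Theorem \ref{theo:7} need not contain any product cube $I^N$, so one has to genuinely re-choose $z_0$ and $\tt_0$ to reposition $z_n$ close enough to the diagonal. The fact that, under Assumption \ref{ass:5}, every coordinate of the endpoint is separately controlled by a distinct flow-time parameter (a feature of the non-interacting dynamics) is what ultimately makes this repositioning possible.
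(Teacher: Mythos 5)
Your first two paragraphs are fine and match the paper's (one-line) argument: any $\tilde\nu \le \tilde\beta\, 1_{B_{\delta_2}(z_n)}$ yields the Doeblin bound after normalisation, so everything reduces to fitting the support of $\bar f(y)\prod_{i=1}^N r(y^i)$, i.e.\ the cube $(\mathrm{supp}\, r)^N$, inside $B_{\delta_2}(z_n)$. You are also right that this cube always contains a diagonal point $(v,\dots,v)$ with $r(v)>0$, so the ball must meet the diagonal — a point the paper's proof passes over in silence. The gap is in your third paragraph, where you claim that $z_n$ can always be repositioned into an arbitrary neighbourhood of the diagonal by re-choosing $z_0$ and $\tt$. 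The submersion property only gives that the set of reachable endpoints is \emph{open}; an open set of $\R^N$ need not come close to the codimension-$(N-1)$ diagonal. Nor are the coordinates independently controlled: the matrix \eqref{eq:sigmaexplizit} is triangular in the non-interacting case, so $t_k$ moves coordinates $1,\dots,k$ simultaneously. Concretely, in Example \ref{ex:neuro} with positive weights one computes
$z_n^j - z_n^N \;=\; \bigl[e^{-\lambda t_N}-e^{-\lambda(t_j+\dots+t_N)}\bigr]v^* + \sum_{k>j} e^{-\lambda(s_N-s_{k-1})}W_{k\to j} \;\ge\; e^{-\lambda t_N} W_{N\to j} \;>\;0$,
so for jump times confined to $[0,\varepsilon]$ the endpoint stays at distance at least $e^{-\lambda\varepsilon}\min_j W_{N\to j}$ from the diagonal, while $\delta_2$ comes out of the quantitative submersion theorem of Bena\"im et al.\ and is not at your disposal. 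So the repositioning step is not proved and is false in general within the constraints of Theorem \ref{theo:7}.

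The clean repair — which is what the paper implicitly does, and which is all that is ever used downstream — is to weaken the conclusion to a coordinate-dependent product form $\nu(x)/\bar f(x)=\prod_{i=1}^N r_i(x^i)$ with $r_i\in C^\infty_c(\R,\R_+)$ possibly distinct. Every later use of the lemma (the factorisation leading to \eqref{eq:qi}, and the product structure of the class ${\mathcal P}^{(k^*)}$ in Section \ref{sec:37}) only requires that $\nu/\bar f$ factorises over the coordinates, never that the factors coincide. With distinct $r_i$ the support is the box $\prod_i \mathrm{supp}(r_i)$, which can be centred at $z_n$ itself: take $r_i$ supported in $(z_n^i-\delta_2/\sqrt N,\,z_n^i+\delta_2/\sqrt N)$ with $\prod_i\|r_i\|_\infty\le \tilde\beta/\sup_{B_{\delta_2}(z_n)}\bar f$ (finite since $\bar f$ is continuous), and conclude exactly as in your first paragraph. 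If you insist on a single $r$, you need an additional argument that $B_{\delta_2}(z_n)$ can be made to meet the diagonal, and that argument is currently missing.
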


\begin{proof}
This simply follows from choosing $r\in C^\infty_c ( \R , \R_+) $ such that $ (\prod_{i=1}^N r (x^i ) )\bar f (x) \le 1_{ B_{\delta_2} (z_n) } (x) , $ as in the end of the proof of Theorem \ref{theo:7}. This choice is always possible since $ \bar f $ lower bounded on $B_{\delta_2} (z_n) .$ 
\end{proof}

Let $ g  $ be a smooth test function.
We condition on $ Z_{n } \sim \nu ( x) dx . $ Then, by \eqref{eq:transition},
\begin{equation}\label{eq:encore}
E (  g ( Z_{n+1} ) | Z_n \sim \nu ) = 
\sum_{i=1}^N \int_{\R^N}  \nu (x)  \frac{f_i ( x^i)}{\bar f (x) }  dx  \int_0^\infty e( \Delta_i ( x) , t ) \bar f ( \gamma_t (\Delta_i ( x) ) )  g ( \gamma_t (\Delta_i (x) ) ) dt .
\end{equation}

Notice that due to our assumptions, $ \Delta_i ( x) $ does not depend on $x^i .$ As a consequence, 
$$ \gamma_t ( \Delta_i ( x) ) = ( \tilde \gamma_t ( \Delta_i^1 ( x^1)), \ldots , \tilde \gamma_t ( \Delta_i^{i-1} (x^{i-1} )), \tilde \gamma_t ( 0) , \tilde \gamma_t ( \Delta_i^{i+1} (x^{i+1} )), \ldots , \tilde \gamma_t ( \Delta_i^N ( x^N) ) )$$ 
does not depend on $ x^i , $ neither. 

Fix $i.$ We work with fixed $t$ and use $N-1$ times the one-dimensional transformation of variables given by  
\begin{equation}
 y^j =  \tilde \gamma_{t} ( \Delta_i^j (x^j ) ) = : \beta (i \to j ,  t,   x^j ) , \; \frac{ d y^j }{dx^j } = y_{t  } ( \Delta_i^j ( x^j ) ) ( 1 + (a_i^j)' (x^j ) )  ,
\end{equation}
for any $ j \neq i .$ 

By the explicit equation for $ z_t (x) $ in \eqref{eq:Z} and the linear growth condition on $ \tb ,$ we obtain 
\begin{equation}\label{eq:controllambda}
| \frac{1}{y_{t  } ( \Delta_i^j ( x^j ) ) ( 1 + (a_i^j)' (x^j ) )  } | \le \frac{1}{a} | z_{t  } ( \Delta_i^j ( x^j ) ) | \le \frac{1}{a} e^{B t }.
\end{equation}
As a consequence, $ \R \ni v \mapsto \beta (i\to j ,  t, v)  =: \beta_{  i \to j, t } (v) $ is invertible. We write $\beta_{ i \to j , t } ^{-1} (\cdot )  $ for its inverse function. Notice that $ v \notin {\mathcal E} $ implies that $\beta_{ i \to j , t } ^{-1} (v) \notin {\mathcal E}.$ 

We obtain
$$ d x^j  =\lambda_{i \to j }  ( t, y^j ) dy^j ,$$
for all $ j \neq i , $ where 
\begin{equation}\label{eq:lambda}
\lambda_{i \to j }  (t,  y^j ) = \frac{1}{1 + (a_i^j)' ( \beta_{ i \to j , t } ^{-1}( y^j ) )} z_{t } ( \Delta_i^j ( \beta_{ i \to j , t } ^{-1}( y^j )))  , \;  \sup_{ v \in \R } | \lambda_{ i \to j } ( t, v) |  \le \frac{1}{a} e^{B t}.
\end{equation}
Once these $N-1$ transformations of variables done, we work at fixed $ y^1 , \ldots ,  y^{i-1} , y^{i+1} , \ldots y ^N $ and use the transformation of variables $ y^i = \tilde \gamma_t ( 0) ,$
$ \frac{dy^i}{dt } = \tb ( \tilde \gamma_t (0 ) ) = \tb (y^i ) \neq 0 $ due to Assumption \ref{ass:9}. Recall that
$$
\tilde \gamma^+ ( 0 )  \ni y \mapsto \kappa ( y ) 
$$
denotes the inverse function of $ t \mapsto \tilde \gamma_t (0 ) .$ 
 
We write for any $z \in \R^N, $ 
$$\check  z^i := ( z^1 , \ldots , z^{i-1}, z^{i+1}, \ldots , z^N )  \mbox{ and }  (\check z, y^i  ) := ( z^1, \ldots , z^{i-1} , y^i , z^{i+1} , \ldots , z^N).$$
Moreover, let 
$$ \beta^{-1}_{i \to \cdot } (y  ) := (\beta_{ i \to 1, \kappa ( y^i ) }^{-1}  ( y^1), \ldots , \beta_{ i \to i-1, \kappa ( y^i ) }^{-1} ( y^{i-1} ), \beta_{ i \to i+1, \kappa ( y^i ) }^{-1} ( y^{i+1} ), \ldots , \beta_{ i \to N , \kappa ( y^i ) }^{-1} ( y^{N} ) )$$
and 
$$ ( \beta^{-1}_{i \to \cdot }  ( y), z^i ) :=  ( \beta_{ i \to 1, \kappa ( y^i ) }^{-1}  ( y^1), \ldots , \beta_{ i \to i-1, \kappa ( y^i ) }^{-1} ( y^{i-1} ),  z^i , \beta_{ i \to i+1 , \kappa ( y^i ) }^{-1} ( y^{i+1} ), \ldots , \beta_{i \to N, \kappa ( y^i )  }^{-1} ( y^{N} ))  $$
for any $z^i \in \R.$ 
Then, coming back to \eqref{eq:encore} and resuming the above discussion, 
\begin{multline*}
\int_{\R^N}  \nu (x)  \frac{f_i ( x^i)}{\bar f (x) }  dx  \int_0^\infty e( \Delta_i ( x) , t ) \bar f ( \gamma_t (\Delta_i ( x) ) )  g ( \gamma_t (\Delta_i (x) ) ) dt\\
= \int_\R 1_{ \tilde \gamma^+ ( 0) } (y^i) dy^i \int_{\R^{N-1}}  d\check y^i   \\
\left[ \left\{ \int_{\R} dx^i  ( \nu / \bar f )  (( \beta^{-1}_{i \to \cdot} ( y ), x^i ))  f_i (  x^i ) \right\} \tilde e ( y)  \prod_{j \neq i } \lambda_{i \to j } (\kappa ( y^i ) ,  y^j)  \frac{\bar f ( y  ) }{|\tb ( y^i ) |} \right]  g ( y ), 
\end{multline*}
where 
$$ \tilde e ( y) = \exp \left( { - \int_0^{\kappa ( y^i )} \sum_j  f_j ( \tilde \gamma_{s, \kappa (y^i ) }^{ -1} (y^j) ) ds} \right) ,$$
since $ \tilde \gamma_s ( v) = \tilde \gamma_{s, t }^{-1} ( \tilde \gamma_t (v) ) ,$ for all $s \le t ,$ where $\tilde \gamma_{s, t }^{-1} $ denotes the inverse flow. 

Now we exploit the fact that we have chosen the regeneration density such that $ \nu (x) / \bar f (x) = \prod r(x^i ) . $ As a consequence,
$$ \int_{\R} dx^i  ( \nu / \bar f )  (( \beta^{-1}_{i \to \cdot} ( y ), x^i ))  f_i (  x^i ) = C( f_i , \nu) \prod_{j \neq i } r ( \beta^{-1}_{i \to j , \kappa (y^i ) } (y^j )) ,$$
where $ C(f_i, \nu ) = \int  r(x^i ) f_i (x^i ) dx^i.$ 

Hence we may introduce 

\begin{equation}\label{eq:qi}
q_i (y) := C( f_i , \nu) \prod_{j \neq i } \left[ r ( \beta^{-1}_{i \to j , \kappa (y^i ) } (y^j ))    \lambda_{i \to j } (\kappa ( y^i ) ,  y^j) \right]  \tilde e ( y)  \frac{\bar f ( y  ) }{|\tb ( y^i )| } 1_{ \tilde \gamma^+ (0) } (y^i )  .
\end{equation}
Notice that $ q_i (y) / \bar f (y) $ is once more of product form with respect to $y^j , $ for $j \neq i , $ once $y^i $ is fixed (the term $\tilde e (y) $ is also of product form). As a consequence, the density of $Z_{n+1} $ is the sum of densities such that each of them, divided by $\bar f,$ is of product form :

\begin{equation}
E (  g ( Z_{n+1} ) | Z_n \sim \nu ) =  \int \left( \sum_{i=1}^Nq_i (y)\right) g  (y)  dy .
\end{equation}

{\bf Step 3.} We have to study the regularity of the Lebesgue density $\left( \sum_{i=1}^Nq_i (y)\right) $ in terms of the regularity of the initial density $\nu.$ For that sake we introduce the following objects.

For any multi-index  $ \alpha = ( \alpha_1, \ldots , \alpha_m ) \in \{ 1 , \ldots , N \}^m $ and any fixed $ i ,$  we denote by $ \| \alpha \|_{(i)} := \sum_{ l=1}^m 1_{ \alpha_l = i } $ the number of times that a derivation with respect to $y^i $ arises. Moreover, we put $ \| \alpha \| = m .$ 

Let us then introduce the set of probability densities $ \mathcal P^{(\infty )} = \{ p ( x) dx , p : \R^N \to \R_+ , \int p (x) dx = 1 \}$ satisfying the following three points.
\begin{enumerate}
\item
$ p \in C^\infty ( {\mathcal A}) .$
\item
$ \check p_j (\check x^j ): = \int p ( x) (f_j ( x^j )/\bar f (x) )  dx^j $ belongs to $ C^\infty  (\{ x \in \R^{N-1} : x^i \notin {\mathcal E} , \forall i \}),$ for all $j .$ 
\item
For all  $ \ell \le  N-1, $ for all $ k_1< \ldots < k_\ell  \in \{ 1, \ldots , N \} \setminus \{j \} ,$  for all $ \alpha $ such that $ \| \alpha \|_{(k_1)} =\ldots =\| \alpha \|_{(k_\ell )}=  0,  $ $ \int_{\R^\ell}  | \partial_\alpha  \check p_j ( \check x^j ) |d x^{k_1} \ldots d x^{k_\ell }  < \infty .$ 
\end{enumerate}
Clearly, $ \nu \in {\mathcal P}^\infty.$  We are seeking for conditions under which also  $ q_i \in \mathcal P^\infty$ for all $ 1 \le i \le N .$ 

The points (2) and (3) of the definition of ${\mathcal P}^\infty$ are actually the points that may pose problems  : we are not a priori sure to stay away from $ {\mathcal E}$ where the densities will possibly explode, when integrating.

It is evident that $ q_i \in C^\infty ({\mathcal A})$ since all coefficients are smooth. Moreover, we have the obvious upper bound
\begin{equation}\label{eq:qiupperbound}
|q_i ( y) | \le C( f_i, \nu ) \frac{N F}{a^{N-1}} e^{ (N-1) B \kappa (y^i ) } e^{ - N f_0 \kappa (y^i) } \frac{1}{|\tilde b ( y^i )| } \| r \|^{N-1}_\infty ,
\end{equation}
where 
$$ f_0 = \min_i \inf_v f_i ( v) $$
and where $ B$ is the explosion rate of the inverse flow.

We wish to establish upper bounds for the partial derivatives of $ q_i $ with respect to $y^j , j \neq i, $ and with respect to $y^i .$ We start with the following control. For any $ j \neq i, $ 
\begin{equation}\label{eq:alsogood}
 | \frac{\partial \beta_{ i \to j , t }^{-1} ( y^j)}{\partial t } | \le \frac{|\tb ( y^j) |}{a} e^{B t }.
\end{equation}
This implies, for all $ y \in {\mathcal A},$ 
\begin{equation}\label{eq:alsogood2}
|\frac{\partial \beta^{-1}_{i \to \cdot } (y  )}{\partial y^i }| \le C( B, a ) e^{ B \kappa ( y^i ) } \frac{1}{|\tb ( y^i ) |}.
\end{equation}
The above considerations lead to the following control. 
\begin{equation}\label{eq:qistrich}
| \partial_\alpha q_i ( y) | \le C( F, B, N, a, A) e^{[(N-1)+ \| \alpha\| ]  B \kappa (y^i ) } e^{ - N f_0 \kappa (y^i ) } \frac{1}{ | \tb ( y^i ) |^{ 1 + \| \alpha \|_{(i)} }}  \sup_{\beta : \| \beta \| \le \| \alpha \|} \| \partial_\beta \prod_{j \neq i } r( x^j )   \|_\infty .
\end{equation} We are now going to check whether $ q_i$ satisfies also points (2) and (3) of the definition of ${\mathcal P}^\infty .$ 

{\bf Case $1:$ Integration with respect to $ y^j $ for $ j \neq i .$} 
By the product form of $ q_i / \bar f , $ for any $j \neq i,$  
\begin{multline*}
( \check  q_i )_j (\check y^j ) = C( f_i , \nu )  e^{ -  \sum_{k \neq j } \int_0^{\kappa ( y^i )}  f_k ( \tilde \gamma_{s, \kappa (y^i ) }^{ -1} (y^k) ) ds}  \prod_{ k \neq i , j } [  \lambda_{i \to k} ( \kappa ( y^i ) , y^k)  r (\beta^{-1}_{i \to k, \kappa (y^i) } ( y^k ) ) ] \\
 \frac{1  }{|\tb ( y^i )| } 1_{ \tilde \gamma^+(0)  } (y^i ) 
\left[ \int  r   ( \beta^{-1}_{i \to j, \kappa (y^i) } ( y^j ) )  e^{ - \int_0^{\kappa (y^i ) } f_j  ( \tilde \gamma_{s, \kappa (y^i ) }^{-1} (y^j ) )ds }   \lambda_{i \to j} ( \kappa ( y^i ) , y^j)  f_j (y^j)   dy^j \right] .
\end{multline*}
But using the inverse transformation of variables $ x^j = \beta_{i \to j , \kappa ( y^i) }^{-1} (y^j ), d x^j = \lambda_{i \to j } ( \kappa (y^i ) ,  y^j ) dy^j  ,$ the last integral equals
$$\int  r(x^j) e^{- \int_0^{\kappa (y^i ) } f_j ( \tilde \gamma_s (x^j ) ) ds } f_j ( \tilde \gamma_{\kappa (y^1 ) } ( \Delta_i^j ( x^j ) )) dx^j  < \infty 
$$
since $f_j$ is bounded and $ r $ a density. 

The same argument shows that $ \partial_\alpha ( \check q_i)_j $ is integrable with respect to any iteration of $ dy^ k$ for any $ k \neq i, j  .$ As a consequence, points (2) and (3) are satisfied provided we do not integrate with respect to $ y^i.$  

{\bf Case $2: $ Integration with respect to $y^i.$} The real difficulty is -- of course --  the integration with respect to $ y^i .$ In order to check for instance that $ ( \check q_i)_i $ is smooth, we have to be able to integrate $ \partial_ \alpha q_i , $ for $ \| \alpha\|_{(i) } = 0 , $ with respect to $ dy^i .$ Using the upper bound \eqref{eq:qistrich},  this means that we have to be able to control
\begin{equation}\label{eq:tobeconsidered}
 \int_0^{\tilde \gamma_\infty ( 0)  } e^{[(N-1) + \| \alpha\| ] B \kappa (y^i ) } e^{ - N f_0 \kappa (y^i ) } \frac{1}{  \tb ( y^i ) } dy^i ,
\end{equation}
or, using the inverse transformation $ y^i = \tilde \gamma_t (0), $ 
$$ \int_0^\infty e^{[(N-1) + \| \alpha\| ] B t } e^{ - N f_0 t }dt . $$

The next example illustrates the above discussion and the problems arising in doing such an integration.

\begin{ex}
Let $ \tb (x) = - ( x- v^* ) $ and $ a_i^j (v) = a > 0 $ for all $ i \neq j .$ Then $\tilde \gamma_t (x) = e^{-t} x + ( 1 - e^{-t}) v^* ,$ and $ \kappa ( y) =  \log (v^* / ( v^* - y)   ) .$ Moreover, 
$$ \beta_{i \to j , \kappa ( y^i )}^{-1} ( y^j ) = v^* \frac{  y^j - y^i }{v^* - y^i }  - a , \; \lambda_{i \to j} (\kappa ( y^i ),  y^j ) = \frac{v^*}{v^* - y^i }.$$ 
Suppose finally that $ f_i ( v) \equiv {\rm f} > 0 ,$ where ${\rm f}$ is a positive constant. Then 
\begin{equation}
 q_i ( y) = C (f_i, \nu ) \frac{N {\rm f}^3}{(v^* )^{N{\rm f} - N +1}}  \left[ \prod_{j\neq i } r \left( v^* \frac{y^j - y^i}{v^* - y^i } - a\right) \right]
(v^* - y^i )^{ N {\rm f} - N } 1_{ 0 \le y^i < v^*}.
\end{equation}
Obviously, any derivative with respect to $ y^j , j \neq i, $ of the above term gives an extra term $ (v^* - y^i )^{-1} .$ 
This shows two things. Firstly, if $ N {\rm f} - N - k > - 1 , $ i.e.\
$$ {\rm f} > 1 + ( k-1)/N,$$
then we may derive $k $ times $q_i ( y) $ with respect to any of the $y^j,$  $j \neq i, $ and still get something integrable in $ y^i $ as $ y^i \uparrow v^*.$ 

Secondly, arguing like this is even too pessimistic, since the presence of the term 
$$  \prod_{j\neq i } r \left( v^* \frac{y^j - y^i}{v^* - y^i } - a\right),$$ 
for $ \check y^i  $ fixed, imposes that $  v^* \frac{y^k - y^i}{v^* - y^i } - a \in supp (r)  , $ for all $k \neq i , $  implying that $ |y^i -  v^*| > \eta $ for some $\eta > 0.$ It is however complicated to iterate this argument, since even if $\nu, $ hence $r,$ is of compact support,  $q_i$ will not be of compact support any more.
\end{ex}

The above discussion leads to the following theorem. 

\begin{theo}\label{theo:10}
We impose Assumptions \ref{ass:1}--\ref{ass:3}, Assumption \ref{ass:5} and Assumption \ref{ass:9}. Let $ f_0 = \min_i \inf_v f_i ( v) .$  

If there exists $k^* $ such that $f_i \in H(k^*, F) ,$ for all $ 1 \le i \le N,$ and $\tb \in H(k^* , B) $ and 
\begin{equation}\label{eq:sufficient}
B k^* < N f_0  - (N-1) B,
\end{equation}
then the invariant measure $m$ of the process $(X_t)_{ t \geq 0} $ possesses a density $ p \in C^{k^*} ( {\mathcal A} ) ,$ where 
 $ {\mathcal A} = \{ x \in \R_+^N :  x^i \notin {\mathcal E}  , \; \; \forall 1 \le i \le N  \} .$ 
\end{theo}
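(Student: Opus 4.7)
The plan is to combine the one-step density computation of Step 2 above with the regenerative decomposition of Corollary \ref{cor:1} and Kac's occupation formula. Using the splitting that led to \reff{eq:splitting}--\reff{eq:splitting2}, Kac's formula gives
$$ m^Z(g) = \frac{1}{E(R_2-R_1)} \sum_{n\geq 0} E_\nu\bigl( g(Z_n)\, 1_{\{n<R_1\}}\bigr),$$
where $R_1$ is the first regeneration time and $Z_0\sim\nu.$ Since $m(dy) = \bar f(y)\, m^Z(dy)/m(\bar f)$ and $\bar f \in C^{k^*}$ under $f_i \in H(k^*,F),$ it is enough to exhibit a $C^{k^*}(\mathcal A)$ density for each sub-probability $\mu_n(dy) := E_\nu\bigl(1_{\{n<R_1,\, Z_n\in dy\}}\bigr),$ with estimates that are summable in $n.$

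The core technical step is to prove by induction on $n$ that the class $\mathcal P^{(\infty)}$ is stable under the transition kernel $K$ up to derivatives of order $k^*,$ and that the pointwise bound \reff{eq:qistrich} propagates with a constant depending only on the $H(k^*,F)$ and $H(k^*,B)$ norms of the data. The base case $n=1$ starting from $\nu$ is the content of Step 2 above, which exploited the product structure $\nu/\bar f = \prod r(x^i).$ For the inductive step, one repeats, with $p \in \mathcal P^{(\infty)}$ in place of $\nu,$ the double change of variables $y^j = \beta_{i\to j,\kappa(y^i)}(x^j)$ for $j\neq i$ followed by $y^i = \tilde\gamma_t(0)$ in \reff{eq:transition}. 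Condition (3) in the definition of $\mathcal P^{(\infty)}$ is precisely what validates the Case~1 integrations over $y^j,\, j\neq i,$ while condition (2) ensures that $\check p_i$ together with its iterated partial derivatives remain in the appropriate class. Book-keeping yields, for any multi-index $\alpha$ with $\|\alpha\|\le k^*,$ an estimate of the same form as \reff{eq:qistrich}.

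Under the substitution $y^i = \tilde\gamma_t(0),$ assumption \reff{eq:sufficient} is equivalent to
$$ \int_0^\infty e^{[(N-1)+k^*]Bt - Nf_0 t}\, dt < \infty,$$
so the Case~2 integrations converge uniformly for $\|\alpha\|\le k^*.$ The survival factor $e^{-\int_0^t \bar f(\gamma_s(\cdot))ds}\le e^{-Nf_0 t}$ built into each step of the chain, when multiplied over $n$ iterations, produces geometric decay that dominates any polynomial-in-$n$ accumulation of constants. This gives $\sum_n \|\mu_n\|_{C^{k^*}(\mathcal A)} < \infty,$ and multiplying by $\bar f$ via $m(dy) = \bar f(y) m^Z(dy)/m(\bar f)$ delivers the $C^{k^*}(\mathcal A)$ density of $m.$

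The principal obstacle is the inductive step. The product form $\nu/\bar f = \prod r(x^i)$ that made \reff{eq:qi} fully explicit is destroyed after one application of $K,$ so one must work intrinsically inside $\mathcal P^{(\infty)}.$ The delicate points are (i) verifying condition (3) of the definition for $pK,$ which amounts to an iterated Fubini argument controlling mixed partial derivatives on the ``history'' of jumping indices, and (ii) preventing the constants in the derivative estimate from picking up factors that would break the strict inequality \reff{eq:sufficient}. The second point is what forces the restriction to $k^*$ derivatives rather than allowing $C^\infty$ regularity.
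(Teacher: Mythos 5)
Your proposal follows essentially the same route as the paper's proof: condition \reff{eq:sufficient} is exactly what makes the Case~2 integral \reff{eq:tobeconsidered} finite, the class $\mathcal P^{(k^*)}$ (the truncation of $\mathcal P^{(\infty)}$ to $k^*$ derivatives) is shown stable under the transition kernel $K$ via the computations of Steps 2--3, and the conclusion is assembled through Nummelin splitting and the Kac occupation formula as in Step 2 of the proof of Theorem \ref{theo:8}. The only divergence is cosmetic: you make the summability over $n$ of the $C^{k^*}$ bounds an explicit concern (and address it somewhat informally via the survival factor), whereas the paper leaves this point implicit.
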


As a consequence, if there is some balance of the explosion rate $ e^{Bt } $ of the inverse flow and the survival rate $ e^{ - f_0 t } $ of the system, then the invariant density is regular up to some order which is precisely given by this balance. Hence we can exhibit at least one regime in which we are able to say something about (some) regularity of the invariant measure. Of course, the conditions given in the theorem are far from being sharp and it would be interesting to find other regimes where regularity of the invariant density can be shown.

\begin{proof}
Using the transformation of variables $ t = \kappa (s) , $ one sees that equation \eqref{eq:sufficient} is equivalent to 
$$\int_0^\infty e^{[(N-1) B  - N f_0 + k^*B  ] t}  dt =  \int^{\tilde \gamma_\infty ( 0) }_0 e^{[(N-1) B  - N f_0 + k^*B  ] \kappa (s) } \frac{1}{|\tb (s)| } ds < \infty ,$$
which shows that the expression arising in \eqref{eq:tobeconsidered} is finite. 

Following Step 3.\ of the above discussion, we introduce $ \mathcal P^{(k ^* )} = \{ p ( x) dx , p : \R^N \to \R_+ , \int p (x) dx = 1 \}$ satisfying the following three points.
\begin{enumerate}
\item
$ p \in C^{k^*} ( {\mathcal A}) .$
\item
$ \check p_j (\check x^j ): = \int p ( x) (f_j ( x^j )/\bar f (x) )  dx^j $ belongs to $ C^{k^*}  (\{ x \in \R^{N-1} : x^i \notin {\mathcal E} , \forall i \}),$ for all $j .$ 
\item
For all  $ \ell \le  N-1, $ for all $ k_1< \ldots < k_\ell  \in \{ 1, \ldots , N \} \setminus \{j \} ,$  for all $ \alpha $ such that $ \| \alpha \|_{(k_1)} =\ldots =\| \alpha \|_{(k_\ell )}=  0$ and $ \| \alpha \| \le k^*,$ $ \int_{\R^\ell}  | \partial_\alpha  \check p_j ( \check x^j ) |d x^{k_1} \ldots d x^{k_\ell }  < \infty .$
\end{enumerate} 

Under the conditions of the theorem, in particular \eqref{eq:sufficient}, the Step 3.\ of the above discussion shows : If $ Z_n \sim p ( x) dx  \in {\mathcal P}^{(k^*)}  $  then also ${\mathcal L}( Z_{n+1})   \in {\mathcal P}^{(k^*)} .$ By Nummelin splitting and since the regeneration law $ \nu (x) dx $ belongs to $ {\mathcal P}^{(k^*)}, $ this implies the assertion, following the lines of Step 2.\ of the proof of Theorem \ref{theo:8}.
\end{proof}

\begin{cor}
For the system of interacting neurons introduced in Example \ref{ex:neuro}, if $f_0 > \lambda, $ the invariant density is at least $k-$times differentiable on ${\mathcal A} ,$ for any $ k < N f_0 / \lambda - (N-1) .$ 
\end{cor}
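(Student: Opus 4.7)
The plan is a direct application of Theorem \ref{theo:10} to the setting of Example \ref{ex:neuro}, once the various hypotheses are verified with the explicit constants of the neuron model.

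First, I would check the standing assumptions. In the neuron example the drift is $b(x) = (-\lambda(x^1-v^*),\ldots,-\lambda(x^N-v^*))$, so there is no coupling in the flow and Assumption \ref{ass:5}.1 holds with $\tilde b(v) = -\lambda(v - v^*)$. The off-diagonal jump functions $a_i^j(v) = W_{i\to j}$ are constants, hence $C^\infty$ with $\Delta_i^j(v) = v + W_{i\to j}$ satisfying $\inf_v |d\Delta_i^j/dv| = 1$, so Assumption \ref{ass:5}.2 holds with $a = 1$ and $A = \max_{i\neq j} W_{i\to j}$; the rate functions $f_i(x)=f_i(x^i)$ give Assumption \ref{ass:5}.3 by hypothesis. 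The equilibrium set is $\mathcal E = \{v^*\}$, and since $v^* > 0$ we have $0 \notin \mathcal E$, which is Assumption \ref{ass:9}. Assumptions \ref{ass:1} and \ref{ass:2} follow immediately from $f_0 > 0$ and the affine form of $\tilde b$, while the Harris recurrence (Assumption \ref{ass:3}) is precisely the content established in \cite{bresiliens} and \cite{pierre}.

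Next, I would identify the constant $B$ that governs the smoothness class of the drift. Since $\tilde b'(v) \equiv -\lambda$ and $\tilde b^{(l)} \equiv 0$ for all $l \geq 2$, one has $\tilde b \in H(k^*+1,\lambda)$ for every $k^* \geq 0$. Thus $B = \lambda$ is the sharp constant appearing in Theorem \ref{theo:10}.

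The decisive step is then to substitute $B = \lambda$ into the sufficient condition \eqref{eq:sufficient}, which reduces to
\[
\lambda k^* < N f_0 - (N-1)\lambda, \qquad \text{i.e.,} \qquad k^* < \frac{N f_0}{\lambda} - (N-1).
\]
Under the assumption $f_0 > \lambda$, the right-hand side is strictly greater than $1$, so there is at least one admissible positive integer $k^*$, and any integer $k < Nf_0/\lambda - (N-1)$ qualifies. Applying Theorem \ref{theo:10} with this $k^*$ (and $f_i \in H(k^*,F)$ for the fixed jump rates) yields that the invariant density lies in $C^k(\mathcal A)$ for every such $k$. No step here is genuinely difficult; the substantive content has been done inside Theorem \ref{theo:10} itself, and the only ``obstacle'' is bookkeeping the constants---in particular recognizing that the exponential attraction rate $\lambda$ plays the role of $B$, which is why the threshold $f_0 > \lambda$ is exactly what makes the regularity regime non-trivial.
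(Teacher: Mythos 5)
Your proposal is correct and follows essentially the same route as the paper: verify the hypotheses of Theorem \ref{theo:10} for the neuron model, identify $B=\lambda$ as the bound on $\tilde b'$, and read off the condition $k < Nf_0/\lambda - (N-1)$ from \eqref{eq:sufficient}. The only cosmetic difference is that the paper's proof also re-derives explicitly that $(N,\tt,\ii)$ is good and that the columns of $Y_t(\Delta_i(x))A^i(x)$ together with $b(\gamma_t(\Delta_i(x)))$ span $\R^N$, whereas you rely on these facts following automatically from Assumptions \ref{ass:5} and \ref{ass:9} via Step 1 of Section \ref{sec:37} --- which is legitimate.
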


\begin{proof}
We have already shown that $ ( N, \tt , \ii) $ is good for $ \ii = ( 1, 2, \ldots , N ),$ for any $ \tt .$ Thus, the conditions of Theorem \ref{theo:7} are fulfilled. Moreover, $ Y_t (x) =e^{ - \lambda t } Id  $ for all $ x \in \R^N, $  with $ Id $ the $N \times N -$identity matrix, and $ A^i (x) = diag ( 1, \ldots , 1 , 0 , 1, \ldots, 1 ) $ is the diagonal matrix consisting of all $1's,$ except for the $i-$th entry which is a $0.$ As a consequence, 
$$ Y_t ( \Delta_i ( x) ) A^i (x) = e^{  - \lambda t } \left( 
\begin{array}{cccccccccccc}
e_1 & |  & \cdots &|&  e_{i-1} &| &0 &|& e_{i+1 } & \cdots &|& e_N 
\end{array} 
\right) ,$$
where $e_k$ are the unit vectors in $\R^N.$ Moreover, 
$$ b ( \gamma_t ( \Delta^i (x) ) = \left(
\begin{array}{c}
* \\
\vdots \\
* \\
\lambda e^{  - \lambda t } m \\
* \\
\vdots \\
* 
\end{array} \right) 
\leftarrow \mbox{ $i-$th coordinate},$$
implying that the columns of  $ Y_t ( \Delta_i ( x) ) A^i (x) $ and $   b ( \gamma_t ( \Delta^i (x) ) )$ span $ \R^N $ for all $ t .$ The assertion then follows from Theorem \ref{theo:10}, observing that $ B = \lambda .$ 
\end{proof}

\section*{Appendix}
\subsection{Proof of Theorem \ref{theo:invmeasure}.} 
W.l.o.g.\ we take $i=1,$ i.e.\ we study the smoothness of the invariant density of the first particle. 

We rely on the following smoothness criterion in dimension $1$ that we quote from Bally and Caramellino (2011) \cite{Bally-Caramellino}. Let $\lambda $ be the Lebesgue measure on $ \R .$ Recall that we work locally on $S_{d , k +2 } = \{ v \in \R : (k+2) A < |v|, |\tb (v) | > d \} , $ for $ d > (k+2) AB.$ We fix some $ m \geq 1 .$ Let
$W^{m, p }_{\lambda} ( S_{d , k +2 }) $ be the space of all functions $ \phi \in L^p ( \lambda ) $ such that for all $ \ell \le m , $ for all $g \in C^\infty_c ( S_{d , k +2 }) ,$ 
\begin{equation}\label{eq:Sobolev}
 \int \partial_\ell g (x) \phi  (x) \lambda ( dx) = (- 1)^\ell \int g(x) \theta_\ell ( x) \lambda ( dx) , 
\end{equation} 
for some functions $ \theta_\ell \in L^p ( \lambda), $ for all $ 1 \le \ell \le m .$ If \eqref{eq:Sobolev} holds, then we write $ \partial^{\lambda}_\ell \phi := \theta_\ell .$ If $\phi $ and all $\theta_\ell $ are bounded on $S_{d , k +2 }, $ then we say that $ \phi \in W^{m, \infty  }_{\lambda} ( S_{d , k +2 })$ and we introduce 
$$ \| \phi\|_{W^{m, \infty  }_{\lambda} ( S_{d , k +2}) } :=  \sup_{ 0 \le \ell \le m }  \sup_{ x \in S_{d , k +2 } } |\partial^{\lambda}_\ell \phi (x)   | .$$ We quote the following theorem from \cite{Bally-Caramellino}.

\begin{theo}[Theorem 8 of \cite{Bally-Caramellino}]\label{theo:vlad} 
Write $ \pi (dx) = \phi (x) \lambda ( dx) .$ If $\phi \in W^{m, \infty  }_{\lambda} ( S_{d , k+2 }) , $ then $ \pi (dx) = \pi ( x) dx $ locally on $S_{d , k +2 } .$ Moreover, $\pi \in C^{m-1} (S_{d , k +2 }) $ and for all $ \ell \le m-1,$ 
$$ \sup_{ x \in S_{d , k +2 }} |   \pi^{(\ell ) }   ( x) | \le C \| \phi\|_{W^{m, \infty  }_{\lambda} ( S_{d , k+2 }) }  ,$$
where the constant $C$ does not depend on $\phi .$ 
Finally, $ \pi^{(m-1) } $ is H\"older- continuous of order $\tilde \alpha $ for any $ \tilde \alpha < 1 , $ and 
$$ \sup_{ x , x' \in S_{d , k +2 }} |   \pi^{(m-1 ) }  ( x) - \pi^{(m-1)} ( x')  | \le C( \tilde \alpha )    \| \phi\|_{W^{m, \infty  }_{\lambda} ( S_{d , k +2 }) } |x-x'|^{\tilde \alpha },$$
where the constant $ C( \tilde \alpha ) $ does not depend on $\phi.$  
\end{theo}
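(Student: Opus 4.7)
The plan is to deduce the theorem from the standard one-dimensional Sobolev embedding $W^{m,\infty}(I) \hookrightarrow C^{m-1,1}(I)$ for intervals $I \subset \R$, applied locally on $S_{d,k+2}$, which is a (possibly disconnected) open subset of $\R$.

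First I would prove the base case $m=1$. Fix any open interval $I \subset S_{d,k+2}$ and a point $x_0 \in I$, and define the candidate continuous representative
\[
\tilde\phi(x) := \phi(x_0) + \int_{x_0}^x \partial^\lambda_1 \phi(t)\, dt, \qquad x \in I.
\]
By the definition of the weak derivative, $\int g' \phi\, d\lambda = -\int g\, \partial^\lambda_1 \phi\, d\lambda$ for every $g \in C^\infty_c(I)$; integration by parts applied to the absolutely continuous function $\tilde\phi$ yields the same identity, so $\tilde\phi - \phi$ has vanishing weak derivative on $I$. The one-dimensional du Bois--Reymond lemma then forces $\tilde\phi - \phi$ to be a.e.\ constant, and the choice of $x_0$ forces that constant to be $0$. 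Thus $\phi$ admits a Lipschitz representative, and $|\tilde\phi(x) - \tilde\phi(y)| \le \|\partial^\lambda_1 \phi\|_\infty |x - y|$ on $I$.

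Next I would iterate. For general $m$, apply the base case to $\partial^\lambda_{m-1}\phi \in W^{1,\infty}_\lambda$ (whose weak derivative is $\partial^\lambda_m \phi$) to obtain a Lipschitz representative $\pi_{m-1}$; then apply the base case to $\partial^\lambda_{m-2}\phi$, whose weak derivative is now represented continuously by $\pi_{m-1}$, producing a $C^1$ representative $\pi_{m-2}$ with $\pi_{m-2}' = \pi_{m-1}$. Continuing down to level $0$ gives a representative $\pi \in C^{m-1}(S_{d,k+2})$ with classical derivatives $\pi^{(\ell)} = \pi_\ell$ satisfying $\pi^{(m-1)}$ Lipschitz on each connected component. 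At each step the $L^\infty$ bound on the weak derivative is inherited by the classical one, so
\[
\sup_{x \in S_{d,k+2}} |\pi^{(\ell)}(x)| \le \|\partial^\lambda_\ell \phi\|_\infty \le \|\phi\|_{W^{m,\infty}_\lambda(S_{d,k+2})}
\]
for every $0 \le \ell \le m-1$, which is the stated pointwise bound (with $C$ a universal constant).

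Finally, I would derive the H\"older estimate on $\pi^{(m-1)}$. Within any connected component of $S_{d,k+2}$, Lipschitz continuity of $\pi^{(m-1)}$ gives automatically $\tilde\alpha$-H\"older continuity for every $\tilde\alpha \le 1$. The main obstacle -- and the reason the statement restricts to $\tilde\alpha < 1$ -- is handling pairs $x, x'$ lying in different components, or far apart within one component: here one splits the estimate into the regime $|x - x'| \le 1$, where the Lipschitz bound dominates, and $|x - x'| > 1$, where one uses the uniform bound $|\pi^{(m-1)}(x) - \pi^{(m-1)}(x')| \le 2 \|\partial^\lambda_{m-1}\phi\|_\infty \le 2 \|\partial^\lambda_{m-1}\phi\|_\infty |x-x'|^{\tilde\alpha}$, valid precisely because $\tilde\alpha < 1$ lets $|x-x'|^{\tilde\alpha}$ absorb the multiplicative constant. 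Combining both regimes produces a constant $C(\tilde\alpha)$ independent of $\phi$, completing the proof.
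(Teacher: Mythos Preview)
The paper does not prove this theorem: it is quoted from Bally and Caramellino (2011), Theorem~8, and invoked as a black box in the proof of Theorem~\ref{theo:invmeasure}. There is therefore no argument in the paper to compare yours against.

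Your proof is the standard one-dimensional Sobolev embedding argument and is essentially correct for the existence of a $C^{m-1}$ representative and the pointwise bounds. The one place that deserves tightening is the H\"older estimate across connected components of $S_{d,k+2}$. Your dichotomy ``$|x-x'|\le 1$ versus $|x-x'|>1$'' presumes that when $|x-x'|\le 1$ the Lipschitz bound on $\pi^{(m-1)}$ applies, but that bound comes from the integral representation and is only valid when $x,x'$ lie in the \emph{same} component. If two components of $S_{d,k+2}$ sit at distance less than~$1$, points $x,x'$ in distinct components with $|x-x'|\le 1$ fall through both branches of your argument. The repair is immediate once one remembers that $C(\tilde\alpha)$ is allowed to depend on the fixed set $S_{d,k+2}$: replace the threshold $1$ by the minimum gap $\delta_0>0$ between distinct components, so that $|x-x'|\le\delta_0$ forces $x,x'$ into the same component and the Lipschitz bound is legitimate. (If the components of $S_{d,k+2}$ were to accumulate, so that $\delta_0=0$, the global H\"older bound as stated would actually fail; that is a feature of the formulation of the quoted result rather than a defect of your method.)
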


We now show how to apply the above theorem. We work under stationary regime and suppose that $X_0 \sim m.$ In this case, $ Z_k \sim \frac{1}{m ( \overline f)} m  ( \bar f \cdot ) $ for any $k, $ where we recall that $ Z_k = X_{T_k - } .$ 

Now, let $g  \in C^\infty_c (S_{d , k +2 }) $ be a smooth test function having compact support in $ S_{d , k +2 }.$ We start with a  control on $ \pi ( g) .$ Using \eqref{eq:useful}, we obtain
\begin{equation}\label{eq:muz}
\pi ( g) =\sum_{i=1}^N \int_{\R^N} m ( dx)  f_i ( x) \int_0^\infty  e( \Delta_i ( x) , t)   g( \tilde \gamma_t (\Delta^1_i ( x)  ) ) dt . 
\end{equation}

{\bf Step 1.}
We work with a fixed value of $y:= \Delta_i ( x) $ and wish to use the change of variables 
$$ s = \tilde \gamma_t ( y^1) , ds =  \tb (s) dt  .$$
By the support properties of the function $g,$ $ g ( \tilde \gamma_t (y^1)) \neq 0 $ implies that $ \tilde \gamma_t ( y^1 ) \notin  {\mathcal E}.$ Therefore, $ y^1 = \Delta^1_i ( x^1) \notin {\mathcal E} $ neither and $ t \mapsto \tilde \gamma_t ( y^1) $ invertible for all $t \in [0, \infty [.$ Let $ \kappa_{y^1} ( s) $ be the associated inverse function. Then we may rewrite 
$$ \int_{0}^{\infty}  e( y , t)   g( \tilde \gamma_t (y^1 ) ) dt = \int_{ y^1 }^{\tilde \gamma_\infty (y^1) }  e (y, \kappa_{y^1}   ( s) )  \frac{g(s) }{\tb ( s) } ds $$
and obtain from \eqref{eq:muz} that
$$ d \pi = \phi d \lambda ,$$
$\lambda $ the Lebesgue measure on $\R,$ where  
\begin{equation}\label{eq:nudens}
\phi ( s) =  \sum_{i=1}^N \int m ( dx) f_i ( x)  1_{ \tilde \gamma^+  (\Delta_i (x) ) } (s)    e (\Delta_i (x) , \kappa_{\Delta_i^1 (x) }    ( s) ) \frac{1}{|\tb ( s)| }  .
\end{equation}
Notice that $\phi$ is bounded on $S_{d , k +2} , $ with bound given by 
$$ \sup_{ s \in S_{d , k +2} } \phi ( s) \le \frac{NF}{ d },$$
since $ | \tb ( s) | \geq d .$ 
Therefore, we are exactly in the situation of Theorem \ref{theo:vlad}.  In particular, $\pi$ possesses a bounded Lebesgue density $\pi $ on $ S_{d, k +2 } $ which is precisely given by $ \pi ( s) = \phi ( s)$ for all
$ s \in S_{d , k +2 }.$ 

{\bf Step 2.} Let us come back to \eqref{eq:muz}. In general, we will have to consider expressions of the form
\begin{equation}
\pi_H ( g) := \sum_{i=1}^N \int m(dx) f_i ( x) \int_0^\infty e( \Delta_i ( x), t ) H( \gamma_t ( \Delta_i ( x)) ) g ( \tilde \gamma_t (\Delta^1_i ( x))) dt ,
\end{equation}
where $ \gamma_t (y) = ( \tilde \gamma_t (y^1 ) , \ldots , \tilde \gamma_t (y^N) ) $ is the joint flow of the $N$ particles and where $ H : \R^N \to \R $ is bounded and smooth, for a smooth function $ g : \R \to \R$ such that $ g  \equiv 0 $ in a neighborhood of $0$ and such that $ g (v) \neq 0 $ implies that $\tb ( v) \neq 0 ,$ i.e.\ $ v \notin {\mathcal E}.$ Notice that for $ H \equiv 1,$ 
$$ \pi_1 (g) = E_m ( g(X_t^1 ) ) = \pi ( g)  \mbox{ and therefore } \pi_1 (1) = 1.$$ 
In order to clarify the structure of the problem, let us consider 
$$
\pi_H ( g'') = \sum_{i=1}^N \int m ( dx) f_i ( x)   \int_0^\infty \frac{e ( \Delta_i ( x) , t)}{\tb ( \gamma_t (\Delta^1_i ( x) ) )  }  H( \gamma_t ( \Delta_i ( x)) )   g'' ( \tilde \gamma_t ( \Delta^1_i (x)  )) \frac{d \tilde \gamma_t ( \Delta^1_i ( x)  )}{dt }  dt  .
$$
Integration by parts yields  
\begin{multline*}
\int_0^\infty \frac{e (y , t)}{\tb ( \tilde \gamma_t (y^1 ) } H( \gamma_t ( y) )   g'' ( \tilde \gamma_t ( y^1 )) \frac{d \tilde \gamma_t ( y^1 )}{dt }  dt  
= [ \frac{e (y,t)}{\tb( \tilde \gamma_t (y^1) ) } H( \gamma_t (y) )   g' ( \tilde \gamma_t (y^1) ) ]_0^\infty \\
- \int_0^\infty \frac{d}{dt} \left[ \frac{e (y , t)}{\tb ( \tilde \gamma_t (y^1)  )  }H( \gamma_t (y) ) \right]    g' ( \tilde \gamma_t (y^1) ) dt .
\end{multline*}

The boundary terms that arise in this integration by parts formula have to be studied carefully. Firstly, exploiting Assumption \ref{ass:2},
$$ \left( \frac{e (y ,t)}{\tb ( \tilde \gamma_t (y^1)  ) } H( \gamma_t ( y) )   g' ( \tilde \gamma_t (y^1)) \right)_{| t = \infty } = 0.$$

Moreover,
$$  \left( \frac{e (y ,t)}{\tb ( \tilde \gamma_t (y^1)  ) } H( \gamma_t ( y) )   g' ( \tilde \gamma_t (y^1)) \right)_{| t = 0 }  =  \frac{g'(y^1)  }{\tb ( y^1 ) } H( y) .$$
Finally, we calculate
\begin{eqnarray*}
 \frac{d}{dt} \left[\frac{e (y , t)}{\tb ( \gamma^1_t (y) ) } H( \gamma_t (y) ) \right] &=& - e(y , t)[ \frac{ [\bar f   H] ( \gamma_t (y) )  - <\nabla H  , b  >( \gamma_t (y) ) + H( \gamma_t(y) ) \tb' ( \tilde \gamma_t (y^1) ) }{ \tb ( \tilde \gamma_t (y^1))   }  ]\\
&= : &- e (y, t) G( H) (\gamma_t( y)) ,
\end{eqnarray*} 
where 
\begin{equation}\label{eq:GH}
G(H) (x) =  \frac{ [\bar f   H] ( x )  - <\nabla H  , b  >( x ) + H( x ) \tb' ( x^1 ) }{ \tb ( x^1)   }  .
\end{equation}

Hence
\begin{equation}\label{eq:important}
\pi_H ( g'') = - \sum_{i=1}^N \int m (dx) f_i ( x)  \frac{g'(\Delta^1_i (x^1)) }{\tb ( \Delta_i^1(x^1)  ) }  H( \Delta_i ( x) ) 
+ \pi_{G(H) } ( g') . 
\end{equation}

Let us study the first term in the above expression,  
$$ T_1 :=  -  \sum_{i=1}^N \int m  (dx) f_i( x )  \frac{g'(\Delta_i^1 ( x^1)) }{\tb ( \Delta_i^1 (x^1)) } H( \Delta_i ( x) ) ,$$
where $ \Delta_i^1 ( x^1) = 0 $ for $ i = 1.$   
Since $g'(0) = 0$ ($g \equiv 0$ in a neighborhood of $0$),  the term $i=1$ does not appear in the above sum. Therefore, the above term equals
\begin{equation}
T_1  = - \sum_{i=2}^N \int m (dx) f_i ( x)   \frac{[g \circ\Delta_i^1]' (x^1)   }{\tb' ( \Delta_i^1  (x^1)  ) (\Delta_i^1 )' (x^1 )  } H( \Delta_i ( x) )  ,
\end{equation}
where $ (\Delta_i^1)' (x^1 ) \neq 0 $ by Assumption \ref{ass:5}.

We apply \eqref{eq:useful} to the test function $  f_i ( x)   \frac{[g \circ \Delta_i^1]' (x^1)   }{\tb ( \Delta_i^1  (x^1)  )(\Delta_i^1 )' (x^1 )  } H( \Delta_i ( x) ) := \sum_{i=2}^N G_i (H) (x) [g \circ \Delta_i^1]'   (x^1)   ,$ where 
\begin{equation}\label{eq:GHi}
G_i ( H) (x) :=     \frac{f_i ( x) }{\tb ( \Delta_i^1  (x^1)  )(\Delta_i^1 )' (x^1 )  } H( \Delta_i ( x) )  ,
\end{equation}
for any $ i \geq 2.$ As a consequence, 
\begin{eqnarray}\label{eq:221}
T_1 &=&
-\sum_{j=1}^N  \sum_{i=2}^N \int m ( dx) f_j (x) \int_0^\infty e(\Delta_j ( x) ,t) \nonumber \\
&& \quad \quad \quad   
 \frac{[g  \circ  \Delta_i^1]'  \circ \gamma^1_t ( \Delta_j (x) )  )}{  \tb (  \Delta_i^1 \circ \tilde \gamma_t ( \Delta^1_j (x) )  ) (\Delta_i^1 )' (\tilde \gamma_t ( \Delta^1_j (x) )) } f_i (\gamma_t ( \Delta_j(x) ) ) H( \Delta_i ( \gamma_t ( \Delta_j(x) ) )) dt  \nonumber \\
&=&- \sum_{i=2}^N  \pi_{G_i ( H) } ([ g \circ \Delta_i^1]' ) .
\end{eqnarray}

Resuming the above discussion, we obtain 
\begin{equation}\label{eq:IPP}
\pi_H ( g'') = \pi_{G(H) } ( g') - \sum_{i=2}^N \pi_{G_i ( H) } ( [g \circ \Delta_i^1]' ) ,
\end{equation}
for any smooth test function $g: \R \to \R $ such that $ supp ( g) \subset {\mathcal E}^c$ and such that $ g \equiv  0$ on a neighborhood of $0,$ 
where $G(H) $ and $G_i ( H) $ are given in \eqref{eq:GH} and \eqref{eq:GHi}. 

Of course, we want to iterate the above procedure. For that sake, we have to be sure that $ g \circ \Delta_i^1 $ appearing in the second term of \eqref{eq:IPP} belongs still to the class of functions which are admissible in order to obtain \eqref{eq:IPP}, i.e.\ $ g \circ \Delta_i^1 \equiv 0$ in a neighborhood of $0$ and $ |\tb (v) | $ lower bounded on the support of $ g \circ \Delta_i^1 .$ This is why we have to restrict attention to the set $S_{d, k +2}.$ (We will give more details in the next step.) 
%

{\bf Step 3.} 
For a function $ H$ defined on $\R^N  , r \in \N $ and any open set $B \subset \R^N,$  we denote 
$$ \| H\|_{B, r, \infty } := \sum_{k=0}^r \sum_{ |\alpha | = k } \sup_{ x \in B} | \partial^\alpha H  (x) |  .$$
Write $ \bar S_{d, k +2 } : = \{ x \in \R^N : x^1 \in S_{d, k +2 } \} = S_{d, k +2 } \times \R^{N-1} .$ 
Then
\begin{equation}
\| G (H) \|_{\bar S_{d, k +2 } , 0, \infty } \le \frac{NFB}{d} \| H\|_{\bar S_{d, k +2} , 1, \infty } 
\end{equation}
and
\begin{equation}\label{eq:controlGHi}
\| G_i ( H) \|_{\bar S_{d, k +2 } , 0, \infty } \le \frac{F}{a d} \| H \|_{ \bar S_{d, k +2} , 0, \infty } .
\end{equation}
Finally, the second term in \eqref{eq:IPP} contains the test function $g$ transported by the jump term, i.e. the function $ g \circ \Delta_i^1.$ Its support is contained in 
\begin{equation}\label{eq:sautsupp}
(\Delta_i^1 )^{-1}  ( S_{d, k +2 })  \subset S_{d - AB, k+1}.
\end{equation}
This last inclusion can be seen as follows. $ \Delta_i^1 (x^1 )  \in S_{d, k +2} $ implies on the one hand that
$|\tb ( \Delta_i^1  (x^1)  )|  = |\tb ( x^1 + a_i^1 ( x^1 ) )| \geq d .$ But
$$ \tb (x^1 ) = \tb ( x^1 + a_i^1 (x^1) ) - \tb' ( x^1 + \vartheta a_i^1 ( x) ) a_i^1  (x) ,$$
for some $ \vartheta \in ]0, 1 [,$ whence 
\begin{equation}\label{eq:tblb}
 | \tb ( x^1 ) | \geq d -AB . 
\end{equation}
On the other hand, $ |x^1 | \geq | \Delta_i^1 ( x^i ) | - A \geq (k+2) A - A = (k+1) A.$ As a consequence, $ x^1 \in S_{d- AB, k+ 1}.$ 

Therefore, coming back to \eqref{eq:IPP}, we have 
$$ \| \pi_H ( g'') \|_\infty \le \left[ \frac{NFB}{d} \|H\|_{\bar S_{d, k +2 } , 1, \infty } + \frac{F}{a(d- AB)} \| H\|_{\bar S_{d- AB, k+1} , 0, \infty } \right] \| g '\|_\infty $$
(recall that $ d > (k+2)   AB$).

{\bf Step 4.} As a consequence, we may iterating \eqref{eq:important} $k+1$ times. For $g \in C^\infty_c ( S_{d, k +2 }),$ we obtain
\begin{multline}\label{eq:oufff}
\pi ( g^{(k+1)} ) = \pi_1 ( g^{(k+1) } ) = 
\pi_{G(1) } (g^{(k)}) - \sum_{i_1= 2}^N \pi_{G_{i_1} ( 1) } ( ( g \circ \Delta_{i_1}^1)^{(k)} ) \\
= \pi_{G^2 (1) } ( g^{(k-1) } ) - \sum_{i_1= 2}^N \pi_{G \circ G_{i_1} ( 1) } ( ( g \circ \Delta_{i_1}^1)^{(k-1)} )\\
+\sum_{i_2=2}^N  \sum_{i_1= 2 }^N\pi_{G_{i_2 } \circ G_{i_1} (1)} (( g \circ \Delta_{i_1}^1 \circ \Delta_{i_2}^1 
)^{(k-1)} ) . 
\end{multline}
These iterations give rise to the following tree.  Let 
\begin{equation}\label{eq:tree}
{\mathcal V} := \{ (i_1, \ldots , i_{k+1} ) : i_l \in \{2, \ldots , N \} \cup \{ 0 \} \} ,
\end{equation}
where each choice $ i_l = 0 $ will be interpreted as choice of $G.$ Of course, each choice $ i_l   \in \{ 2, \ldots , N\} $ corresponds to the choice of $G_{i_l}.$ We write ${\mathbf v}$ for all elements of ${\mathcal V} $ and put 
$$ G_{\mathbf v } := G_{i_{k+1}} \circ G_{i_k} \circ \ldots \circ G_{i_1} , $$
for $ \mathbf v = (i_1, \ldots , i_{k+1} ),$  where $G_0 := G ,$ and 
$$ \Delta_{\mathbf v} := \Delta_{i_1} \circ \ldots \circ \Delta_{i_{k+1} } , $$
with $ \Delta_0 \equiv id.$ Finally we put $ sgn ( \mathbf v) := (-1)^{\sum_{l=1}^{k+1} 1_{ \{i_l  \neq 0 \} }} .$ 

Then 
\begin{equation}\label{eq:IPPbis}
\pi ( g^{(k+1)} )  = \sum_{ \mathbf v \in {\mathcal V} } sgn ( \mathbf v) \pi_{ G_{\mathbf v } ( 1) } ( g \circ \Delta^1_{\mathbf v} ) .
\end{equation}

In order to conclude the proof and to apply Theorem \ref{theo:vlad} with $ m=k+1,$ we have to show that each term appearing in the last expression of \eqref{eq:IPPbis} can be written as an integral with respect to the Lebesgue measure. This can be shown by using a simple change of variables. Consider e.g.\ the expression $\pi_{G_{i_{k+1}} \circ \ldots \circ G_{i_2 } \circ G_{i_1} (1)} ( g \circ \Delta_{i_1}^1 \circ \Delta_{i_2}^1 \circ \ldots \circ \Delta_{i_{k+1}}^1  ) ,$ corresponding to  $ \mathbf v = ( i_1, \ldots, i_{k+1} ) \in \{2, \ldots , N\}^{k+1}  $ (no choice of $ 0$).  

Firstly, 
$$supp (  \Delta_{\mathbf v}^1)^{-1} ( S_{d, k+2} )  \subset S_{d - (k+1) AB ,  1} , $$
which follows from \eqref{eq:sautsupp}.
Moreover, by \eqref{eq:controlGHi}, 
$$ \| G_{\mathbf v } ( 1)  \|_{ \bar S_{d- (k+1) AB, 1} , 0, \infty } \le C( F, B, k , a) \frac{1}{ d (d-AB) \ldots (d- (k+1) AB) } . $$
Recall that
$$
\pi_{G_{\mathbf v } (1)} ( g \circ \Delta_{\mathbf v }^1) =  
\sum_l \int m (dx) f_l ( x) \int_0^\infty e( \Delta_l ( x), t ) (G_{\mathbf v } (1) ( \gamma_t (\Delta_l (x) )))  g \circ \Delta_{\mathbf v}^1 ( \tilde \gamma_t ( \Delta_l^1 ( x^1 ) ) ) dt .
$$
Therefore, we use the change of variables $ s = \Delta_{\mathbf v}^1  (\tilde \gamma_t (\Delta_l^1 (x^1) )),  $
$$ ds = [\Delta_{\mathbf v}^1]' (  \tilde \gamma_t (\Delta_l^1 (x^1) )) \tb (\tilde \gamma_t (\Delta_l^1 (x^1) )) dt  ,$$
with 
$$ | [\Delta_{\mathbf v}^1]' (  \tilde \gamma_t (\Delta_l^1 (x^1) )) | > C ( k, a  ) $$
(recall Assumption \ref{ass:5}) 
and 
$$ | \tb (\tilde \gamma_t (\Delta_l^1 (x^1) ))| \geq d - (k+1) AB > AB ,$$
since $ \tilde \gamma_t (\Delta_l^1 (x^1) ) \in S_{d - (k+1) AB ,  1} $ by the support property of $g \circ \Delta_{\mathbf v}^1 .$
We write $ \kappa_{\mathbf v, l } (s, x^1) $ for the inverse function of $ t \mapsto \Delta_{\mathbf v}^1  (\tilde \gamma_t (\Delta_l^1 (x^1) )) $ and obtain that  
$$
\pi_{G_{\mathbf v }(1)} ( g \circ \Delta_{\mathbf v}^1  ) = 
\int  g (s ) \theta_{\mathbf v} (s) dy ,
$$
where
\begin{multline*}
\theta_{\mathbf v} (s) = \sum_l \int m (dx) f_l ( x) 1_{ \Delta_{\mathbf v } ( \tilde \gamma^+ ( \Delta_l^1 (x^1 ) ) ) } (s) \\
e( \Delta_l (x) , \kappa_{ \mathbf v, l } (s, x^1)) G_{\mathbf v} (1) ( \gamma_{\kappa_{ \mathbf v, l } (s, x^1)} ( \Delta_l (x) )) \\
\left([\Delta_{\mathbf v}^1]' (  \tilde \gamma_{  \kappa_{\mathbf v, l } (s, x^1)   } (\Delta_l^1 (x^1) )) \tb (\tilde \gamma_{  \kappa_{\mathbf v , l } (s, x^1) }  (\Delta_l^1 (x^1) ))  \right)^{-1} .
\end{multline*}
Moreover, we have the bound 
$$\| \theta_{\mathbf v} \|_{ S_{d  ,  k+2}  , 0, \infty } \le C( N, F, B, k, a)  \frac{1}{ d (d-AB) \ldots (d- (k+1) AB) } \frac{1}{AB} . $$

Putting all things together, we see from the above considerations that
$$ \pi ( g^{(k+1)} ) = \int g(s) \Theta  ( s) \lambda ( ds) ,$$
for some Lebesgue density $\Theta ,$ where 
$ \sup_{s \in S_{d, k+2} } | \Theta ( s) |  \le  C( k, F, B, a, A,  N, d)  $ for some constant depending only on the classes of functions $H( k , F),  $ $H(k+1, B)$ and on $a, A $ and $d.$ 

{\bf Step 5.} By Theorem \ref{theo:vlad}, applied with $ m=k+1,$ we deduce that $\pi \in C^k ( S_{d , k+2 }), $ and $ \pi^{(k) } $ is H\"older continuous of any order $ \tilde \alpha < 1.$ Moreover, 
$$ \sup_{\ell \le k , w \in S_{d , k+2 } } |  \pi^{(\ell )} ( w)  | + \sup_{w \neq w' , w, w' \in S_{d , k +2 } } \frac{\pi^{(k)} (w) - \pi^{(k)} (w') }{|w-w'|^\alpha } \le C ,$$
for some constant which does only depend on the classes of functions $H( k , F),  $ $H(k+1, B)$ but not on $f_i$ or $\tb.$ 
This finishes our proof. $\hfill \qed $

\subsection{On the equilibria of one-dimensional dynamical systems}\label{sec:flow}
For the one-dimensional flow $\tilde \gamma_t $ of Section \ref{sec:23}, we have that 
\begin{equation}
\tilde \gamma^+ ( {\mathcal E}^c) \subset {\mathcal E}^c .
\end{equation}

\begin{proof}
Let $ v $ be such that $ \tb ( v) \neq 0 $ and suppose that there exists $0 < t^* < \infty $ such that $ \tilde \gamma_{t^* }   (v) = v^* $ with $ \tb ( v^*) = 0.$ Using Taylor's formula in a neighborhood of $v^*, $ we obtain $ \tb ( x) = \tb ' (\xi ) ( x - v^* ) , $ for some $ \xi \in ]x, v^* [ \cup ]v^* , x[ .$ Since $ v^* $ is necessarily attracting, $ \tb ' ( \xi ) \le 0, $ for $ |x- v^* | \le \varepsilon, $ for some appropriate $ \varepsilon > 0 .$ Therefore, $ \tb ( x) \le B (v^* - x) ,$ if $ x \in ]v^* - \varepsilon, v^* [ , $ and $ \tb ( x) \geq B ( v^* - x ) , $ for $x \in ]v^*, v^* + \varepsilon  [ ,$ where $B  $ is an upper bound on $  \| \tb' \|_\infty .$ Suppose w.l.o.g. that $ v < v^* .$ Let $ t_\varepsilon = \inf \{ t : \tilde \gamma_t ( v) \in ]v^* - \varepsilon, v^* [ \} .$ By assumption, $t_\varepsilon < t^* < \infty .$ A simple comparison argument shows that on $ [t_\varepsilon, t^* ], $ 
$$ \tilde \gamma_s ( v) \le \bar \gamma_s (v) $$
where $ d \bar \gamma_s ( v) = - B ( \bar \gamma_s ( v)  - v^* ) ds , s \geq t_\varepsilon , $ with initial condition $ \bar \gamma_{t_\varepsilon } ( v) = \tilde \gamma_{t_\varepsilon} = v^* - \varepsilon.$ But the equation for $ \bar \gamma $ possesses an explicit solution given by 
$$ \bar \gamma_s ( v) = e^{- B ( s - t_\varepsilon ) } [v^* - \varepsilon] + ( 1 - e^{- B ( s - t_\varepsilon ) } ) v^* < v^* $$
for all $ s > t_\varepsilon. $ As a consequence, $\tilde \gamma_{t^* } ( v) \le \bar \gamma_{t^* } ( v) < v^* ,$ which is a contradiction. 
This finishes our proof. 
\end{proof}

\section*{Acknowledgments}
This research has been conducted as part of the project Labex MME-DII (ANR11-LBX-0023-01) and as part of the activities of FAPESP  Research, Innovation and Dissemination Center for Neuromathematics (grant
2013/ 07699-0, S.Paulo Research Foundation).

\end{document}